\newtheorem{thm}{Theorem}
\newtheorem{cor}[thm]{Corollary}
\newtheorem{lem}[thm]{Lemma}
\newtheorem{prop}[thm]{Proposition}
\newtheorem{fact}[thm]{Fact}
\newtheorem{defn}[thm]{Definition}
\theoremstyle{definition}
\newtheorem{rem}{Remark}
\newtheorem{notation}{Notation}
\newcommand{\rr}{\mathbb{R}}
\newcommand{\nn}{\mathbb{N}}
\newcommand{\ee}{\varepsilon}
\newcommand{\ff}{\mathcal{F}}
\newcommand{\rrr}{\mathcal{R}}
\newcommand{\dens}{\mathrm{dens}}
\newcommand{\meg}{\geqslant}
\newcommand{\mik}{\leqslant}
\newcommand{\con}{\smallfrown}
\newcommand{\cv}{\mathrm{c}}
\begin{document}

\title{Combinatorial Structures on van der Waerden sets}

\author{Konstantinos Tyros}

\address{Department of Mathematics, University of Toronto, Toronto, Canada, M5S 2E4 }
\email{ktyros@math.toronto.edu}

\thanks{2010 \textit{Mathematical Subject Classification}: 05D10.}
\thanks{\textit{Key words}: Density, Ramsey theory, trees, strong subtrees, finite sets, van der Waerden sets.}

\begin{abstract}
  In this paper we provide two results. The first one consists an
  infinitary version of the Furstenberg-Weiss Theorem. More precisely
  we show that every subset $A$ of a homogeneous tree $T$ such that $\frac{|A\cap T(n)|}{|T(n)|}\meg\delta$,
  where $T(n)$ denotes the $n$-th level of $T$, for all $n$ in a van der Waerden set, for some positive real
  $\delta$, contains a strong subtree having a level sets which forms a van der Waerden set.

  The second result is the following. For every sequence $(m_q)_{q}$ of positive integers and for every real $0<\delta\leqslant1$, there exists a sequence
  $(n_q)_{q}$ of positive integers such that for every $D\subseteq \bigcup_k\prod_{q=0}^{k-1}[n_q]$ satisfying
  $$\frac{\big{|}D\cap \prod_{q=0}^{k-1} [n_q]\big{|}}{\prod_{q=0}^{k-1}n_q}\geqslant\delta$$ for every $k$ in a van der Waerden set,
  there is a sequence $(J_q)_{q}$, where $J_q$ is an arithmetic progression of length $m_q$ contained in $[n_q]$ for all $q$, such that $\prod_{q=0}^{k-1}J_q\subseteq D$ for every $k$ in a van der Waerden set. Moreover, working in an abstract setting, we obtain $J_q$ to
  be any configuration of natural numbers that can be found in an arbitrary set of positive density.
\end{abstract}
\maketitle

\section{Introduction}

In 1975 E. Szemer\'edi \cite{Sz} settling in the affirmative an old conjecture
of P. Erd\H{o}s and P. Tur\'an, established a density version of van der Waerden Theorem.
More precisely, Szemer\'edi's Theorem states the following.
\begin{thm}
\label{Szemeredi}
  For every integer $k$ with $k\meg2$ and every
real $\delta$ with $0<\delta\mik1$, there exits a positive integer number $n_0$
having the following property. For every $n\meg n_0$ and every subset $A$ of
$\{0,...,n-1\}$ with at least $\delta n$ elements, there exists an arithmetic progression
of length $k$ contained in $A$. We denote the least such $n_0$ by $\mathrm{Sz}(k,\delta)$.
\end{thm}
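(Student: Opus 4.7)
The plan is to prove the theorem by induction on $k$, combining a density increment strategy with Fourier-analytic (and, for $k\meg 4$, higher-order Fourier-analytic) input. The base case $k=2$ is trivial, and the first nontrivial case $k=3$ is Roth's theorem, which I would handle via Fourier analysis on $\mathbb{Z}/N\mathbb{Z}$: a direct counting identity expresses the number of $3$-APs in $A\subseteq\{0,\ldots,N-1\}$ in terms of the Fourier coefficients of $1_A$, so if $A$ has density $\delta$ and avoids $3$-APs, some nontrivial Fourier coefficient must have magnitude at least $c\delta^{2}$ for an absolute constant $c>0$. A standard pigeonhole argument converts this into a density increment of $A$ by at least $c'\delta^{2}$ on an arithmetic subprogression of length at least $c''N^{1/2}$. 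Iterating $O(\delta^{-1})$ many times forces a contradiction and yields an explicit bound on $\mathrm{Sz}(3,\delta)$.

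For $k\meg 4$ I would follow Gowers' approach through the uniformity norms $U^{k-1}$. Two ingredients are central. First, the generalized von Neumann inequality, which says that the count of $k$-APs in $A$ is controlled, up to its expected value $\delta^{k}N^{2}$, by $\|1_{A}-\delta\|_{U^{k-1}}$; hence a $k$-AP-free set $A$ of density $\delta$ must satisfy $\|1_{A}-\delta\|_{U^{k-1}}\meg \eta(\delta)>0$. Second, the inverse theorem for $U^{k-1}$, which asserts that such a lower bound forces $1_{A}-\delta$ to correlate, on a long arithmetic subprogression, with a locally polynomial phase of degree $k-2$.

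Granted the inverse theorem, the structured correlation is converted into a density increment on an arithmetic subprogression exactly as in the Roth case: one invokes the inductive hypothesis, namely Szemer\'edi's Theorem for progressions of length $k-1$, to partition the domain into long APs on which the polynomial phase is approximately constant, and then pigeonholes over these pieces to locate one on which $A$ has density at least $\delta+\theta(\delta)$ for some $\theta(\delta)>0$. Iterating the increment argument $O(1/\theta(\delta))$ times delivers the desired $n_{0}=\mathrm{Sz}(k,\delta)$.

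The main obstacle, and historically the deepest part of the theorem, is the inverse theorem for $U^{k-1}$. In Gowers' original treatment it requires a delicate iterative argument combining Weyl-type equidistribution for polynomials with additive-combinatorial tools such as Freiman's theorem and the Balog--Szemer\'edi--Gowers lemma. Alternative routes --- Furstenberg's ergodic-theoretic correspondence principle together with multiple recurrence, or the hypergraph regularity/removal method of Gowers, Nagle--R\"odl--Schacht and Tao --- bypass Fourier analysis altogether but replace it with substantial machinery of their own. Any of these routes proves the statement, with the quantitative bounds on $\mathrm{Sz}(k,\delta)$ deteriorating rapidly as $k$ grows.
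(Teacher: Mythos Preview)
Your sketch is a reasonable high-level outline of the Gowers approach to Szemer\'edi's theorem, and the main ideas you identify (generalized von Neumann, the $U^{k-1}$ inverse theorem, density increment) are correct. However, the paper does not prove this statement at all: Theorem~\ref{Szemeredi} is quoted in the introduction as a known result, with a citation to Szemer\'edi's original paper \cite{Sz} (and to Gowers \cite{G} for quantitative bounds), and is then used as a black box throughout. So there is no ``paper's own proof'' to compare against; the authors simply import the theorem.

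That said, if one treats your proposal as a genuine proof attempt, it is closer to a roadmap than a proof. You explicitly flag the $U^{k-1}$ inverse theorem as the ``main obstacle'' but do not indicate how you would establish it beyond naming the tools involved (Weyl equidistribution, Freiman, Balog--Szemer\'edi--Gowers); since that step carries essentially all of the difficulty for $k\meg 4$, the sketch as written does not close the argument. For the purposes of this paper, though, none of that matters: Szemer\'edi's theorem is an input, not a result to be proved here.
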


It is an easy observation that subsets of the natural numbers of positive upper Banach density
do not necessarily contain infinite arithmetic progressions. However, using Theorem \ref{Szemeredi}
as a pigeon hole principle, one easily derives that such sets are \textit{van der Waerden},
i.e. they contain arbitrarily long arithmetic progressions.

In the present work we provide two examples of infinitary combinatorial structures that can be realized on
a van der Waerden set. The first one consists of an infinitary version of a result of Y. Furstenberg
and B. Weiss \cite{FuWe} concerning the existence of ``strong'' subtrees having an arithmetic progression
as a level set in ``big'' subsets of a homogeneous tree. Their result actually consists an extensions
of the finitary version of Szemer\'edi's Theorem (Theorem \ref{Szemeredi}). Using Furstenberg-Weiss
Theorem as a pigeon hole, we construct infinite strong subtrees having a van der Waerden set as a level set
inside ``big'' subsets of an infinite homogeneous tree.

The second result included in this work
consists of an extension of the main theorem contained in \cite{TT}, which in its turn establishes a density version of a Ramsey theoretic result \cite{DLT, To}.
In particular, it was shown that
for every positive real $\ee$ and every sequence $(m_q)_q$ of positive integers there exists a sequence $(n_q)_q$ of positive integers
with the following property. For every infinite subset $L$ of the positive integers and every sequence $(D_\ell)_{\ell\in L}$ such that
$D_\ell$ is a subset of $\prod_{q=0}^{\ell-1}\{1,...,n_q\}$ of density at least $\ee$ for all $\ell\in L$, there exist a sequence $(I_q)_q$
and infinite subset $L'$ of $L$ such that $I_q$ is a subset of $\{1,...,n_q\}$ of cardinality $m_q$ for all non-negative integers $q$ and
the set $\prod_{q=0}^{\ell-1}I_q$ is a subset of $D_\ell$ for all $\ell\in L'$.
In the present paper we provide a strengthening of this result which is optimal in several aspects. Firstly, we show that the set $L'$ can
be chosen to be a van der Waerden set provided that $L$ itself is a van der Waerden set (this is, clearly, a necessary condition). Moreover,
the sets $I_q$ can be endowed with additional structure. For instance, each $I_q$ can be chosen to be an arithmetic or a polynomial progression.
The construction of the sequence $(I_q)_q$ is effective avoiding, in particular, compactness arguments as in \cite{TT}.

To explicitly state our results we need
some pieces of notation. By $\nn$ we denote the set of the natural
numbers $\{0,1,2,...\}$ starting from $0$. By the term \emph{tree}
we mean a partial ordered set $(T,\mik)$
satisfying the following.
\begin{enumerate}
  \item[(i)] The tree $T$ is \emph{uniquely rooted}, i.e. $T$ admits a minimum with
  respect to $\mik$.
  \item[(ii)] For every $t\in T$, the set of the predecessors of $t$ in $T$
  is finite and linearly ordered.
  \item[(iii)] For every non-maximal $t\in T$, the set of the immediate successors of $t$ in $T$
  is finite.
  \item[(iv)] The tree $T$ is \emph{balanced}, i.e. all maximal chains in $T$ are
  of the same cardinality.
\end{enumerate}
For a tree $T$, we define the \emph{height} $h(T)$ of $T$ to be the cardinality of a maximal chain in $T$,
while for every node $t$ in $T$
we set $\ell_T(t)$ to be the cardinality of the set $\{s\in T:s<t\}$.
Let us point that $T$ is infinite if and only if $h(T)=\omega$, where as usual $\omega$
stands for the first infinite cardinal number.
Finally, for a tree $T$ and an integer $n$ such that $0\mik n<h(T)$, by $T(n)$ we denote the \emph{$n$-th
level} of $T$, that is the set $\{t\in T:\ell_T(t)=n\}$.
The substructures that we are interested in are the \emph{strong subtrees}, a notion that was isolated in the 1960s and it was highlighted with the work of K. Milliken in \cite{Mi1,Mi2}.
A subset $S$ of a tree $T$ is called
a strong subtree of $T$ if the following are satisfied.
\begin{enumerate}
  \item[(a)] $S$ endowed with the induced ordering from $T$ is a tree.
  \item[(b)] Every level of $S$ is a subset of some level of $T$.
  \item[(c)] Every non-maximal $s$ in $S$ has the following property.
  For every $s'$ immediate successor of $s$ in $S$ there exists an immediate successor
  $t'$ of $s$ in $T$ such that $t'\mik s'$.
\end{enumerate}
The level set of a strong subtree $S$ of a tree $T$, denoted by $L_T(S)$, is the set
$\{n\in\nn:n<h(T)\;\text{and}\;T(n)\cap S\neq\emptyset\}$.
If $b$ is a positive integer, then a
$b$-homogeneous tree $T$ is a tree such that every non-maximal node $t$ in $T$
has exactly $b$ immediate successors. A tree is called homogeneous, if it is
$b$-homogeneous for some positive integer $b$.
The result of H. Furstenberg and B. Weiss that we mentioned above is the following.
\begin{thm}
  \label{Furst_Weiss_finite}
  Let $b,k$ be positive integers and $\delta$ be a real with $0<\delta\mik1$.
  Then there exists a positive integer $n_0$ with the following property.
  For every finite $b$-homogeneous tree $T$ of height at least $n_0$ and every
  subset $A$ of $T$ satisfying
  \begin{equation}
    \label{aeq001}
    \mathbb{E}_{0\mik n<h(T)} \frac{|T(n)\cap A|}{|T(n)|}\meg\delta,
  \end{equation}
  there exists a strong subtree $S$ of $T$ contained in $A$ and having
  a level set which forms an arithmetic progression of length $k$.
  We denote the least such $n_0$ by $\mathrm{FW}(b,k,\delta)$.
\end{thm}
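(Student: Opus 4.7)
The plan is to reduce to Szemer\'edi's Theorem (Theorem~\ref{Szemeredi}) via a double-counting on the maximal paths of $T$, followed by a density extraction of the strong subtree. Identifying the $b$-homogeneous tree $T$ with $[b]^{<h(T)}$, for every maximal path $p\in[b]^{h(T)-1}$ set $P_p=\{0\mik n<h(T):p|_n\in A\}$. Since for uniformly random $p$ the restriction $p|_n$ is uniform on $T(n)$, the hypothesis \eqref{aeq001} is precisely $\mathbb{E}_p |P_p|/h(T)\meg\delta$. Markov then gives that at least a $\delta/2$-fraction of paths satisfy $|P_p|\meg(\delta/2)h(T)$. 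For any such path, the Varnavides strengthening of Theorem~\ref{Szemeredi} yields $\meg c(\delta,k)\cdot h(T)^2$ length-$k$ arithmetic progressions inside $P_p$, provided $h(T)$ is large compared to $\mathrm{Sz}(k,\delta/2)$. Double-counting pairs (path, AP) and then pigeonholing over the AP-parameters $(m_0,d)$ isolates a single arithmetic progression $L=\{m_0,m_0+d,\dots,m_0+(k-1)d\}$ and a set $\Pi\subseteq[b]^{h(T)-1}$ of density $\meg\eta(\delta,k)$ such that $L\subseteq P_p$ for every $p\in\Pi$.

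The second step converts the positive-density ``good-path'' set $\Pi$ into an actual $b$-homogeneous strong subtree of $T$ with level set $L$ lying entirely in $A$. Projecting $\Pi$ to level $m_{k-1}$ yields a set $B\subseteq T(m_{k-1})$ of density $\meg\eta$, each element of which has all of its ancestors at levels $m_0,\dots,m_{k-2}$ inside $A$. I would carry out the extraction by induction on $k$: the case $k=1$ is immediate, and for the inductive step one regards the sub-homogeneous tree carved out by the ancestors of $B$, and applies the induction hypothesis with a slightly weakened density parameter to produce the required branchings at the intermediate levels of $L$.

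The main obstacle is precisely this extraction step. A naive top-down greedy construction can fail, because a dense set of descendants at one level need not distribute itself evenly among the $b$ subtrees below the currently chosen nodes, so that the required $b$-fold branching cannot be realized inside $B$. To circumvent this, I would refine $B$ before choosing branches, iteratively retaining only those nodes whose $b$ subtree-siblings continue to meet $B$ with positive density, and only then perform the branch selection. Tracking the density losses through this refinement, together with the iterated pigeonholes needed to isolate $L$, is what produces the (tower-type) growth of the resulting bound $n_0=\mathrm{FW}(b,k,\delta)$.
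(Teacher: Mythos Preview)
First, note that the paper does not itself prove Theorem~\ref{Furst_Weiss_finite}: it is quoted from Furstenberg--Weiss \cite{FuWe} (with a combinatorial proof and explicit bounds in \cite{PST}) and is then used as a black box to obtain the uniform version, Theorem~\ref{Uniform_Furst_Weiss_finite}. So there is no ``paper's own proof'' to compare against.

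That said, your proposal contains a genuine gap at the extraction step, and it is fatal for the route you describe. The first stage (double-counting over maximal paths and pigeonholing on the AP parameters) is fine and does deliver a fixed progression $L=\{m_0,\dots,m_{k-1}\}$ together with a set $B\subseteq T(m_{k-1})$ of density $\eta=\eta(\delta,k)>0$, all of whose ancestors at the levels of $L$ lie in $A$. The problem is the implicit lemma ``a set $B\subseteq T(m_{k-1})$ of density $\eta$ contains the leaf set of some strong subtree with level set $L$''. This is false. For $b=2$ take
\[
B=\{s\in[2]^{m_{k-1}}:s(m_0)=1\},
\]
which has density $1/2$. Any strong subtree with level set $L$ has its root $r$ at level $m_0$ and, being $b$-homogeneous, must place a leaf below \emph{each} of the two immediate $T$-successors $r^\con 1$ and $r^\con 2$; but $B$ contains no descendant of any $r^\con 2$ whatsoever. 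Your refinement (retain only those nodes all of whose $b$ sibling-subtrees still meet $B$) annihilates $B$ entirely in this example, and the induction on $k$ collapses for the same reason: the set of level-$m_{k-2}$ ancestors all of whose $b$ children reach $B$ is empty.

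Since your pigeonhole produces only a small $\eta$ (far below $1-1/b$), the obstruction above is live. The underlying point is that the passage from ``a positive fraction of maximal chains hit $A$ along $L$'' to ``a full $b$-branching strong subtree along $L$ lies in $A$'' is not a single-level density extraction; it carries the same tree-versus-arithmetic tension as the theorem itself. The known proofs (ergodic in \cite{FuWe}, combinatorial in \cite{PST}) do not decouple the choice of the progression from the construction of the branching, and your scheme would need a substantially new idea at precisely this point.
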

In \cite{PST} J. Pach, J. Solymosi and G. Tardos offered a combinatorial proof
of Theorem \ref{Furst_Weiss_finite} yielding
explicit bounds for the numbers $\mathrm{FW}(b,k,\delta)$.
In the present work we establish the following infinitary version of
Furstenberg-Weiss Theorem.
\begin{thm}
  \label{Furst_Weiss_infinite_cor}
  Let  $T$ be an infinite homogeneous tree and
  $A$ be a subset of $T$ such that
  \begin{equation}
  \label{aeq002}
    \limsup_{N\to\infty}\mathbb{E}_{0\mik n< N}\frac{|T(n)\cap A|}{|T(n)|}>0.
  \end{equation}
  Then there exists a strong subtree $S$ of $T$ contained
  in $A$ and having a level set which forms a van der Waerden set.
\end{thm}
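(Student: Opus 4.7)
The plan is to build $S$ as a nested union $S=\bigcup_{k\meg 1}S_k$ of a chain $S_1\subseteq S_2\subseteq\cdots$ of finite strong subtrees of $T$, each contained in $A$, with the inductive property that $L_T(S_k)$ contains an arithmetic progression of length $k$. Then $L_T(S)\supseteq L_T(S_k)$ for every $k$, so $L_T(S)$ is a van der Waerden set.

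To pass from $S_k$ (with leaves $t_1,\ldots,t_M$ at level $m_k$) to $S_{k+1}$, I would need a multi-rooted (``forest'') version of Theorem \ref{Furst_Weiss_finite}: for every $b,M,k\in\nn$ and $\delta>0$, there exists a threshold $H_0$ such that, given $b$-homogeneous trees $T^{(1)},\ldots,T^{(M)}$ of height at least $H_0$ and subsets $A^{(i)}\subseteq T^{(i)}$ of average density at least $\delta$, one finds a common arithmetic progression $P$ of length $k$ and strong subtrees $R^{(i)}\subseteq A^{(i)}$ of $T^{(i)}$ with $L_{T^{(i)}}(R^{(i)})=P$. This forest version will follow from Theorem \ref{Furst_Weiss_finite} by attaching the trees $T^{(i)}$ as the subtrees rooted at $M$ chosen nodes at level $\lceil\log_b M\rceil$ of an enveloping $b$-homogeneous tree $\tilde{T}$, transferring the sets $A^{(i)}$ into $\tilde{T}$, and applying Theorem \ref{Furst_Weiss_finite} to $\tilde{T}$ with the returned strong subtree constrained to branch fully throughout the preamble $0,1,\ldots,\lceil\log_b M\rceil$. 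Applied to the subtrees $T_{t_i}$ and the sets $A\cap T_{t_i}$, this produces aligned extensions $R^{(i)}\subseteq A$ which, glued to $S_k$, yield $S_{k+1}$ with an arithmetic progression of length $k+1$ inserted into the level set.

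The principal obstacle is keeping the density hypothesis of the forest version satisfied at every stage. I would handle this by maintaining an inductive invariant: there exists $\delta_k>0$ such that every leaf $t$ of $S_k$ satisfies
\[
\limsup_{N\to\infty}\mathbb{E}_{n<N}\frac{|A\cap T_t(n)|}{b^n}\meg\delta_k.
\]
To propagate the invariant from $S_k$ to $S_{k+1}$, after applying the forest version one passes to a sub-strong-subtree whose new leaves all continue to satisfy this density condition with some $\delta_{k+1}>0$, selected by an averaging argument that may reduce the density but keeps it strictly positive. The base case, locating an initial leaf $t\in A$ with positive density of $A$ on $T_t$, is established by a similar level-by-level averaging using the hypothesis (\ref{aeq002}).
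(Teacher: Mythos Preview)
Your forest version of Theorem \ref{Furst_Weiss_finite} as stated is false. Take $M=2$, let $T^{(1)}=T^{(2)}$ be any $b$-homogeneous tree of height $H$, and set $A^{(1)}=\bigcup_{n\text{ even}}T^{(1)}(n)$ and $A^{(2)}=\bigcup_{n\text{ odd}}T^{(2)}(n)$. Each $A^{(i)}$ has average density close to $1/2$, yet any strong subtree contained in $A^{(1)}$ has level set inside the even integers while any strong subtree contained in $A^{(2)}$ has level set inside the odd integers, so they cannot share even a single common level. Your embedding argument fails for the same reason: once the preamble is passed, the strong subtree returned by Theorem \ref{Furst_Weiss_finite} lives entirely inside a single $T^{(i)}$ (if you leave the preamble out of $\tilde{A}$) or cannot simultaneously meet both parity constraints at its branching nodes (if you include it).

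The correct forest statement (the paper's Lemma \ref{almost_done}) requires the \emph{level-wise} hypothesis $\dens_{T^{(i)}}(A^{(i)}|p)\meg\delta$ for all $i$ and all $p$ in a sufficiently long common arithmetic progression; it is proved by iterating the uniformized Furstenberg--Weiss theorem (Corollary \ref{Uniform_Furst_Weiss_finite_cor}), shrinking the progression at each step. Consequently the invariant you must carry through the induction is not the per-leaf $\limsup$ condition you propose --- which permits the good levels to be completely disjoint across leaves, exactly as in the counterexample --- but a synchronized one: a single van der Waerden set $L_k$ on which \emph{every} leaf of $S_k$ has density at least $\delta_k$. Propagating this synchronized invariant, i.e.\ arranging that the new leaves of $S_{k+1}$ lie in $A$ \emph{and} simultaneously retain good density on a common van der Waerden subset of $L_k$, is the genuine technical heart of the argument; the paper handles it (en route to Theorem \ref{Furst_Weiss_infinite}, from which Theorem \ref{Furst_Weiss_infinite_cor} follows at once) via a supremum-stabilisation argument that is considerably more delicate than the averaging you sketch.
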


Actually, the above result is an easy consequence of the following, which
we will prove in this paper.
\begin{thm}
  \label{Furst_Weiss_infinite}
  Let  $T$ be an infinite homogeneous tree and
  $A$ be a subset of $T$ such that there exist a van der Waerden set $L$ and
  a real $\delta$ with $0<\delta\mik1$ such that
  \begin{equation}
    \label{aeq003}
    \frac{|T(n)\cap A|}{|T(n)|}\meg\delta,
  \end{equation}
  for all $n\in L$. Then there exists a strong subtree $S$ of $T$ contained
  in $A$ and having a level set which forms a van der Waerden set.
\end{thm}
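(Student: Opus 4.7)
The plan is to construct the infinite strong subtree $S$ as an increasing union $S=\bigcup_m S_m$ of finite strong subtrees of $T$ inside $A$, arranged so that for every $m$ the level set $L_T(S_m)$ contains an arithmetic progression of length $m$. Since a van der Waerden set is by definition one containing arithmetic progressions of all lengths, the level set of $S$ is then van der Waerden.

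The main technical tool is a \emph{sparse} form of Theorem~\ref{Furst_Weiss_finite}: for every positive integers $b,k,d$ and every $\delta>0$, setting $K_0=\mathrm{FW}(b^d,k,\delta)$, if $T$ is a $b$-homogeneous tree, $\{a,a+d,\ldots,a+(K-1)d\}$ is an arithmetic progression of length $K\meg K_0$ in the set of level indices of $T$, and $|A\cap T(n)|/|T(n)|\meg\delta$ for every $n$ in this progression, then $T$ contains a strong subtree inside $A$ whose level set is an arithmetic progression of length $k$. The proof is direct: for any $t\in T(a)$ the subtree of $T_t$ consisting of its levels $0,d,2d,\ldots,(K-1)d$ is a $b^d$-homogeneous tree of height $K$; averaging the density assumption over $t\in T(a)$ yields some $t_0$ whose associated sparse subtree has average density at least $\delta$ along its $K$ levels, and then Theorem~\ref{Furst_Weiss_finite} applied to this $b^d$-homogeneous tree furnishes a strong subtree inside $A$ with AP level set of length $k$, which is automatically a strong subtree of $T$.

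The main obstacle is passing from $S_{m-1}$ to $S_m$. Letting $R$ denote the leaves of $S_{m-1}$ at level $n^*$ of $T$, the set $R$ is in general very sparse in $T(n^*)$, so the density hypothesis at the $T$-level does not transfer directly to densities inside the subtrees $T_t$ for $t\in R$. To address this, I would maintain throughout the construction the stronger inductive hypothesis that $S_{m-1}$ comes equipped with a van der Waerden set $L_{m-1}\subseteq L$ such that $|A\cap T_t(n-n^*)|/|T_t(n-n^*)|\meg\delta$ for every leaf $t$ of $S_{m-1}$ and every $n\in L_{m-1}$ with $n>n^*$. The base case $S_0=\{\mathrm{root}\}$ with $L_0=L$ holds by hypothesis. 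To propagate the invariant when extending $S_{m-1}$ by a new level (or several new levels in AP), one would use a coloring argument based on van der Waerden's theorem: for each $n\in L_{m-1}$, associate the subset of descendants of leaves of $S_{m-1}$ that inherit the density condition. Since the coloring has only finitely many values, a long monochromatic AP inside $L_{m-1}$ can be extracted, along which every new leaf of the proposed extension carries the density-on-van der Waerden property in its own subtree.

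Given the common AP in $L_{m-1}$ and the uniform density along all leaves, applying the sparse Furstenberg-Weiss step above each leaf and coordinating the resulting progressions (via one further van der Waerden argument to pick a common common-difference) produces the required extension $S_m\supseteq S_{m-1}$ whose new levels form an AP of length $m$ inside $A$, while the invariant for $L_m$ is read off the monochromatic AP above. The principal book-keeping is to ensure that the length of the AP required in $L$ at each iteration is finite; this is controlled by the explicit bounds from Theorem~\ref{Furst_Weiss_finite} and the van der Waerden function, which depend on $m$, $b$, $|R|$, and $\delta$, but not on $T$ itself.
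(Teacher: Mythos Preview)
Your overall architecture---build $S$ as an increasing union of finite strong subtrees $S_m$, each contributing an arithmetic progression of length $m$ to the level set, while carrying along a van der Waerden set $L_m$ on which the density hypothesis persists at every leaf---matches the paper's. The sparse Furstenberg--Weiss step you describe is essentially the content of Theorem~\ref{Uniform_Furst_Weiss_finite} and its Corollary~\ref{Uniform_Furst_Weiss_finite_cor}, and the coordination across leaves is Lemma~\ref{almost_done}.

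The gap is in how you propagate the invariant. You apply Furstenberg--Weiss above each leaf $t\in R$ to land a strong subtree inside $A$, and only afterwards try to argue that its top-level nodes inherit the density condition via a finite colouring of $n\in L_{m-1}$. But the colouring you describe records, for each $n$, \emph{which} descendants at the new top level have $\dens_T(A\mid t',n)\meg\delta$; stabilising this colour on a van der Waerden subset merely fixes a set $G_n=G$ of good descendants. Nothing forces the nodes produced by Furstenberg--Weiss to lie in $G$: FW only guarantees they lie in $A$. Worse, averaging alone gives no lower bound on $|G|$---a single descendant with density near $1$ can absorb the entire average---so you cannot even apply FW inside $G$.

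The paper closes this gap by an energy-increment manoeuvre (Claims~1 and~2 inside Lemma~\ref{inductive_step}). One first passes to a node $t_j$ and a van der Waerden set $L_1$ at which the infimum density $d_{j,t_j,L_1}$ is within $\xi^2/18$ of the supremum $\xi=\xi_{j,L_1}$. Because no descendant can exceed the supremum by more than $\xi^2/18$ on any van der Waerden refinement (else the supremum would increase), one has a uniform \emph{upper} bound on $\dens_T(A\mid t',n)$ for every descendant $t'$ and every $n$ in a suitable refinement. Now Markov's inequality (Fact~\ref{Markov}) turns the two-sided control into a lower bound: the set $B_j$ of descendants with density at least $\xi/2$ has relative density at least $\eta/2$. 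One then applies Furstenberg--Weiss to $B_j\subseteq A$ rather than to $A$, so that the resulting leaves automatically inherit the density condition (at the cost of replacing $\eta$ by $\eta/2$). This halving is why the main proof iterates with $\eta=\delta/2^r$ at stage $r$, something your sketch, which aims to keep $\delta$ fixed, cannot do.
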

\noindent In Section \ref{section_Uniform_FurstWeiss} we obtain a uniformized version of the
Furstenberg-Weiss Theorem needed for the proof of Theorem \ref{Furst_Weiss_infinite} which is given in Section \ref{section_proof_FurstWeiss_infinite}.

To state our second result we need some additional notation.
We denote the set of the all positive integers by $\nn_+$. For every set $X$ by $X^{<\omega}$ we denote the set of all finite sequences in $X$.
The empty sequence is denoted by $\varnothing$ and is included in $X^{<\omega}$. For every positive integer $k$ by $[k]$ we denote the set
$\{1,...,k\}$. By convention, $[0]$ stands for the empty set.

Let $Y$ be a (possibly infinite) set. If $X$ is a nonempty finite subset of $Y$ and $A$ is an arbitrary subset of $Y$, then the
\textit{density of $A$ relative to $X$}, denoted by $\dens_X(A)$, is the quantity defined by
\begin{equation} \label{beq01}
\dens_X(A)=\frac{|A\cap X|}{|X|}.
\end{equation}

We will also need the following slight variant of the standard notion of a density regular family (see, e.g., \cite{Mc1,Mc2}).
\begin{defn} \label{un_dens_reg_defn}
A family $\ff$ of nonempty finite subsets of $\nn_+$ is called \emph{uniformly density regular} if for every $0<\ee\mik 1$ there exists
an integer $n_0$ such that for every interval $I$ of $\nn_+$ of length at least $n_0$ and every subset $A$ of $I$ with $\dens_I(A)\meg\ee$, the set
$A$ contains an element of $\ff$. The least such $n_0$ will be denoted by $B(\ff,\ee)$. Finally, the set of all uniformly density regular
families will be denoted by $\rrr$.
\end{defn}
There are several examples of uniformly regular families. The simplest one is the set of all subsets of the positive integers with exactly
$k$ elements where $k$ is a fixed positive integer. By the famous Szemer\'edi Theorem \cite{Sz}, the set $\mathrm{AP}_k$ of all arithmetic progressions of length $k$ is also a uniformly regular family; see \cite{G} for the best known upper bounds for the numbers $B(\mathrm{AP}_k,\ee)$.
Moreover, for every choice of polynomials $p_1,...,p_k$ taking integer values on the integers and zero on zero, the family
$\big\{\{a+p_1(n),...,a+p_k(n)\}:a\in \mathbb{Z}\text{ and }n\in\mathbb{Z}\big\}$ is uniformly density regular. This is a consequence of the
work of V. Bergelson and A. Leibman \cite{BL}. More examples of uniform density regular families can be found in \cite{FrWi,BM}.

We are now ready to state our result.
\begin{thm} \label{main_result}
Let $0<\delta\mik 1$. Then there exists a map $V_{\delta}:\rrr^{<\omega}\times\nn_+^{<\omega}\to\nn$ with the following property. For every sequence
$(n_q)_q$ of positive integers, every sequence $(\ff_q)_{q}$ of uniformly regular families, every van der Waerden subset $L$ of $\nn_+$ and every sequence $(D_k)_{k\in L}$ such that
\begin{enumerate}
\item[(a)] $n_0\meg V_\delta\big(\ff_0,\varnothing\big)$,
\item[(b)] $n_q\meg V_\delta\big((\ff_p)_{p=0}^q,(n_p)_{p=0}^{q-1}\big)$ for all positive integers $q$ and
\item[(c)] $D_\ell$ is a subset of $\prod_{q=0}^{\ell-1}[n_q]$ of density at least $\delta$ for all $\ell\in L$
\end{enumerate}
there exist a sequence $(I_q)_q$ and a van der Waerden subset $L'$ of $L$ such that
\begin{enumerate}
\item[(i)] $I_q$ is an element of $\ff_q$ contained in $[n_q]$ for all $q\in\nn$ and
\item[(ii)] $\prod_{q=0}^{\ell-1}I_q\subseteq D_\ell$ for all $\ell\in L'$.
\end{enumerate}
\end{thm}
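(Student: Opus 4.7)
The proof will be an effective recursive construction of $(I_q)_q$ and a decreasing chain $L = L_0 \supseteq L_1 \supseteq \cdots$ of van der Waerden subsets of $L$, with $V_\delta$ defined by the same recursion. The key device is an ``intersection-of-slices'' invariant that collapses to the desired containment at its boundary case. Specifically, at step $q$ I maintain $(P_q)$: for every $\ell \in L_q$ with $\ell \meg q$,
\[
\dens_{\prod_{q \mik p < \ell}[n_p]}\!\Big(\bigcap_{\vec x \in \prod_{p<q} I_p}\{\vec y : (\vec x, \vec y) \in D_\ell\}\Big) \meg \delta_q
\]
for some controlled positive threshold $\delta_q$. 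When $\ell = q$ the ambient product is a single point, so the invariant collapses to $\prod_{p<q}I_p \subseteq D_q$, which is precisely the containment demanded by conclusion (ii) for the current level.

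The inductive step $q \to q+1$ combines three tools. First, Fubini applied to $(P_q)$ gives, for every $\ell \in L_q$ with $\ell > q$, that the function sending $a \in [n_q]$ to the density of the $a$-slice of the intersection above averages to at least $\delta_q$. Second, a Varnavides-type strengthening of the uniform density regularity of $\ff_q$ produces an $I_q^{(\ell)} \in \ff_q$ with $I_q^{(\ell)} \subseteq [n_q]$ such that the intersection $\bigcap_{a \in I_q^{(\ell)}}(\text{corresponding $a$-slice})$ has density at least $c(\delta_q, \ff_q) > 0$. Third, since $I_q^{(\ell)}$ takes only finitely many values (as a subset of $[n_q]$), the map $\ell \mapsto I_q^{(\ell)}$ is a finite coloring of the van der Waerden set $L_q \cap [q+1,\infty)$; by the standard corollary of van der Waerden's theorem that any finite coloring of a van der Waerden set admits a van der Waerden monochromatic color class, I extract a common $I_q \in \ff_q$ and a van der Waerden subset $L_{q+1} \subseteq L_q$ on which $(P_{q+1})$ holds with $\delta_{q+1} = c(\delta_q, \ff_q)$.

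The main obstacle is the Varnavides-style strengthening of uniform density regularity. Its plain form produces only a single structured subset of $\ff_q$ inside a given dense set, whereas here I need a structured subset whose indexed slice-intersection remains dense. For the classical examples this is available (Varnavides' theorem for arithmetic progressions, and the results of Bergelson and Leibman for polynomial configurations), and for a general uniformly density regular $\ff_q$ it follows by a supersaturation argument combined with a Jensen/convexity estimate for the intersection density. This is what forces $n_q$ to be much larger than $B(\ff_q, \cdot)$, by a factor depending on $\prod_{p<q}|I_p|$ and $\delta_q$, and it is exactly this quantity that the recursive definition of $V_\delta$ tracks.

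Finally, to ensure the output set $L'$ is van der Waerden (not merely infinite), I run a reservation scheme in parallel with the recursion: at step $q$, within $L_{q+1}$, reserve an arithmetic progression $P_q$ of length growing with $q$, and at each subsequent shrinking apply van der Waerden's theorem internally to $P_q$ — which is subject to only finitely many further pigeonhole steps before its elements become ``past'' — to extract a sub-AP inside the new monochromatic class. The union $L' := \bigcup_q P_q$ then contains arbitrarily long arithmetic progressions and hence is van der Waerden, and the invariant $(P_\ell)$ at its boundary case yields $\prod_{p<\ell} I_p \subseteq D_\ell$ for every $\ell \in L'$.
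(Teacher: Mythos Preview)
Your coordinate-by-coordinate recursion with the slice-intersection invariant $(P_q)$ is sound, and the correlation step you call ``Varnavides-type'' is precisely Lemma~\ref{un_dens_reg_intersections_lem} of the paper; so the engine producing each $I_q$ and shrinking $L_q$ to a van der Waerden $L_{q+1}$ works. The gap is in the reservation scheme. You reserve an arithmetic progression $P_q\subseteq L_{q+1}$ and plan to refine it by van der Waerden at each subsequent pigeonhole step until its elements become ``past'' (i.e.\ until the recursion reaches step $\max P_q$, where the invariant collapses to the desired containment). But the number of such refinements is $\max P_q-q$, and $\max P_q$ is governed by where the progression happens to sit inside $L_{q+1}$, a set over whose sparsity you have no control. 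Hence you cannot choose the initial length of $P_q$ large enough in advance: the required length depends, through an iterated van der Waerden function with $|\{F\in\ff_j:F\subseteq[n_j]\}|$ colors at step $j$, on $\max P_q$, which in turn depends on the length you chose. The remark that the number of steps is ``finite'' does not resolve this circularity.

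The paper breaks the loop by reversing the order of operations and working in blocks rather than one coordinate at a time. At each macro-step it first applies a correlation-on-progressions lemma (Lemma~\ref{main_measure_theoritic_lem}, proved by an energy-increment iteration of Lemma~\ref{Szem_first_cor}) to extract, in one stroke, an arithmetic progression $P$ of the target length together with a set $B\subseteq\bigcap_{i\in P}A_i$ on which the future $A_\ell$ retain relative density at least $\eta/2$. Only then does it build all the $I_p$ from the current level up to $\max P$ at once, inside a refinement of $B$, via the auxiliary map $T$ and Lemma~\ref{T_main_property_lem}. The containment $\prod_{p<q}I_p\subseteq D_q$ for every $q\in P$ is therefore verified immediately rather than deferred, so $P$ never has to survive any further pigeonholing; the next van der Waerden shrinkage concerns only levels beyond $\max P$. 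This block structure is also why Szemer\'edi bounds enter the definition of $V_\delta$ (through $\theta_3$ in~\eqref{beq42}), whereas your recursion would have $V_\delta$ track only the $\ff_q$-correlation constants.
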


The definition of the map $V_\delta$ is based on an auxiliary map $T$ that we will define in Section \ref{section_map_T}. The proof of Theorem
\ref{main_result} as well as the definition of the map $V_\delta$ are given in Section \ref{section_map_V_and_proof}. Some preliminary work in contained in Section
\ref{section_second_result_preliminary}.

\section{Uniformized version of the Furstenberg-Weiss Theorem}\label{section_Uniform_FurstWeiss}
The aim of this section is to obtain a uniformized version of the Furstenberg-Weiss Theorem.
The machinery that we present in this section has been developed in
\cite{DKT}. First, let us recall the
definition  of the Furstenberg-Weiss measures.
For every tree $T$, subset $A$ of $T$ and finite subset $P$ of $\nn$ with $\max P<h(T)$, we define
the Furstenberg-Weiss measure
\begin{equation}
  \label{aeq004}
  d^{\mathrm{FW}}_T(A|P)=\mathbb{E}_{p\in P}\dens_{T(p)}(A),
\end{equation}
where $\dens_{T(p)}(A)$ is as defined in \eqref{beq01}.
If $T$ is finite and $P$ is equal to $\{0,...,h(T)-1\}$, then we simply write $d^{\mathrm{FW}}_T(A)$
instead of $d^{\mathrm{FW}}_T(A|P)$. Moreover, for a subset $A$ of a tree $T$ and $T'$ strong
subtree of $T$, we set $d^{\mathrm{FW}}_{T'}(A)=d^{\mathrm{FW}}_{T'}(A\cap T')$.

\begin{thm}
  \label{Uniform_Furst_Weiss_finite}
  Let $b,k$ be positive integers and $\delta$ be a real with $0<\delta\mik1$.
  Then there exists a positive integer $n_0$ with the following property.
  For every infinite $b$-homogeneous tree $T$, every
  subset $A$ of $T$ and every arithmetic progression $P$ of length at least $n_0$ satisfying
  \begin{equation}
    \label{aeq005}
    d^{\mathrm{FW}}_T(A|P)\meg\delta,
  \end{equation}
  there exists a strong subtree $S$ of $T$ contained in $A$ and having
  a level set which forms an arithmetic progression of length $k$.
  We denote the least such $n_0$ by $\mathrm{UFW}(b,k,\delta)$.
\end{thm}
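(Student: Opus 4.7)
The plan is to derive Theorem \ref{Uniform_Furst_Weiss_finite} from the finitary version Theorem \ref{Furst_Weiss_finite} by a first-moment argument over a suitable family of random strong subtrees. The bound I expect to obtain is $\mathrm{UFW}(b,k,\delta)\mik\mathrm{FW}(b,k,\delta)$, and indeed the specialization $P=\{0,1,\dots,h(T)-1\}$ shows this is sharp.

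First, fix $T$, $A\subseteq T$ and $P=\{a,a+d,\dots,a+(N-1)d\}$ as in the hypothesis, and consider the finite family $\mathcal{S}$ consisting of all strong subtrees $S$ of $T$ whose level set in $T$ is exactly $P$. Each $S\in\mathcal{S}$ is a $b$-homogeneous tree of height $N$ with $|S\cap T(a+id)|=b^{i}$ for every $0\mik i<N$; in particular the $i$-th level of $S$ (as a tree) sits inside $T(a+id)$, so the affine map $i\mapsto a+id$ translates level sets of strong subtrees of $S$ back into level sets inside $T$.

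Second, I compute the expected Furstenberg--Weiss density of $A$ in a uniformly chosen $S\in\mathcal{S}$. The automorphism group of $T$ acts transitively on each level of $T$ and leaves $\mathcal{S}$ setwise invariant, so the inclusion probability $\mathbb{P}_{S}[t\in S]$ depends only on the level of $t$; an equi-distribution count gives $\mathbb{P}_{S}[t\in S]=b^{i}/|T(a+id)|$ whenever $t\in T(a+id)$. A linearity-of-expectation calculation then yields
\[
  \mathbb{E}_{S\in\mathcal{S}}\big[d^{\mathrm{FW}}_S(A\cap S)\big]
  =\frac{1}{N}\sum_{i=0}^{N-1}\frac{1}{b^{i}}\,\mathbb{E}_{S}\big|A\cap S\cap T(a+id)\big|
  =\frac{1}{N}\sum_{i=0}^{N-1}\frac{|A\cap T(a+id)|}{|T(a+id)|}
  =d^{\mathrm{FW}}_{T}(A|P)\meg\delta.
\]

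Third, pigeonhole produces some $S^{\ast}\in\mathcal{S}$ with $d^{\mathrm{FW}}_{S^{\ast}}(A\cap S^{\ast})\meg\delta$. Setting $n_{0}:=\mathrm{FW}(b,k,\delta)$ and assuming $N\meg n_{0}$, Theorem \ref{Furst_Weiss_finite} applied to the finite $b$-homogeneous tree $S^{\ast}$ produces a strong subtree $S'$ of $S^{\ast}$ contained in $A$ whose level set inside $S^{\ast}$ is an arithmetic progression of length $k$. Since ``is a strong subtree of'' is transitive (a direct verification of (a)--(c) from the definition), $S'$ is a strong subtree of $T$, and its level set in $T$ is the image of its level set in $S^{\ast}$ under the affine injection $i\mapsto a+id$; hence it is still an arithmetic progression of length $k$. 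This establishes the claim with $n_{0}=\mathrm{FW}(b,k,\delta)$.

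The only nontrivial step is the first-moment identity, and I expect it to be the main obstacle mainly in so far as one has to double-check the two supporting facts: that every $S\in\mathcal{S}$ has precisely $b^{i}$ nodes on the $i$-th level (so that $d^{\mathrm{FW}}_{S}$ is evaluated on a fixed denominator and the expectation can be pushed inside the sum), and that the transitive action of $\mathrm{Aut}(T)$ on each level of $T$ induces equal inclusion probabilities on $\mathcal{S}$. Once these are in place, the rest is pigeonhole plus an invocation of Theorem \ref{Furst_Weiss_finite}.
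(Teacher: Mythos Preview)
Your proof is correct and follows the same overall strategy as the paper: establish, via a first-moment argument over a family of strong subtrees with level set $P$, that some such subtree $T'$ satisfies $d^{\mathrm{FW}}_{T'}(A)\meg\delta$, and then invoke Theorem~\ref{Furst_Weiss_finite}; the bound $\mathrm{UFW}(b,k,\delta)\mik\mathrm{FW}(b,k,\delta)$ is exactly what the paper obtains. The only difference is in how the averaging identity is proved. The paper parameterizes a specific subfamily of strong subtrees (the Carlson--Simpson trees) by an auxiliary set $X_P$ via the convolution operation of \cite{DKT} and computes the average explicitly (Facts~\ref{tree_property_convolution} and~\ref{averaging}, Corollary~\ref{averaging_cor}). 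You instead average over \emph{all} strong subtrees with level set $P$ and use the transitive action of $\mathrm{Aut}(T)$ on each level to get constant inclusion probabilities. Your route is more elementary and avoids introducing the convolution machinery; the paper's route is more explicit and constructive, and the convolution formalism is imported from \cite{DKT} where it is needed for finer purposes. For the present theorem either argument suffices.
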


Theorem \ref{Uniform_Furst_Weiss_finite} follows by Theorem \ref{Furst_Weiss_finite}
given that we may find a strong subtree $T'$ of $T$ having $P$ as a level set and
satisfying $d^{\mathrm{FW}}_{T'}(A)\meg\delta$.
In order to prove the existence of such a $T'$ we need some notation.
First we describe a finite $b$-homogeneous tree as the set of all sequences over the set
$[b]$ of length less than $h(T)$. More precisely, we define the following.
Let $b$ a positive integer and
$n$ be a non-negative integer. We set
$[b]^n$ to be the set of all maps taking values in $[b]$ and having
domain the set $\{0,...,n-1\}$ (if $n=0$, then $[b]^0=\{\varnothing\}$
where $\varnothing$ denotes the empty sequence). We also set $[b]^{<n}=\cup_{0\mik m<n}[b]^m$.
It is easy to see that for every pair of positive integers $b,n$ the set $[b]^{<n}$
endowed with the end-extension ordering $\sqsubseteq$ is a finite $b$-homogeneous tree
of height $n$. Conversely, every finite $b$-homogeneous tree $T$ is isomorphic
to $[b]^{<h(T)}$. Recall that by $[b]^{<\omega}$ we denote the set of all finite
sequences over $[b]$. It is immediate that $[b]^{<\omega}$ endowed with the end-extension ordering
is an infinite $b$-homogeneous tree and that every infinite $b$-homogeneous tree is
isomorphic to $[b]^{<\omega}$.

We also need to recall the notion of the convolution operation
introduced in \cite{DKT}. To this end we need some additional notation. Let
$P=\{p_0<p_1<...<p_{n-1}\}$ be a non-empty finite subset
of $\nn$ and $b$ be a positive integer. By $[b]^P$ we denote the set
of all maps taking values in $[b]$ and having domain the set $P$.
The \textit{canonical isomorphism} associated to $P$
is the bijection $\mathrm{I}_P:[b]^n\to [b]^P$ defined by the rule
\begin{equation}
\label{aeq006}
\mathrm{I}_P(x)(p_i) = x(i)
\end{equation}
for every $i\in\{0,...,n-1\}$. Moreover, for every $x_1\in[b]^{P_1}$ and $x_2\in[b]^{P_2}$,
where $P_1$ and $P_2$ are disjoint finite subsets of $\nn$,
by $(x_1,x_2)$ be denote the unique element $x\in[b]^{P_1\cup P_2}$ such that
$x(i)=x_1(i)$ for all $i\in P_1$ and $x(i)=x_2(i)$ for all $i\in P_2$.
Finally, for every pair of non-negative integers $m<n$ and every $x\in[b]^n$, by $R_m(x)$
we denote the initial segment of $x$ of length $m$.

\begin{defn} \label{convolution}
Let $b$ be a positive integer and $P=\{p_0<p_1<...<P_{|P|-1}\}$ be a nonempty finite subset of $\nn$.
For every $i\in\{0,...,|P|-1\}$ we set
\begin{equation} \label{aeq007}
P_i=\{p\in P: p<p_i\} \text{ and } \overline{P}_i=\{n\in\nn: n<p_i \text{ and } n\notin P_i\}.
\end{equation}
Also let $n_P=\max(P)-|P|+1$ and set
\begin{equation} \label{aeq008}
X_P=[b]^{n_P}.
\end{equation}
We define the \emph{convolution operation} $\cv_P:[b]^{<|P|}\times X_P\to [b]^{<\omega}$ associated to $P$ as follows. For every $i\in\{0,...,|P|-1\}$,
every $t\in [b]^i$ and every $x\in X_P$ we set
\begin{equation} \label{aeq009}
\cv_P(t,x)=\big(\mathrm{I}_{P_i}(t),\mathrm{I}_{\overline{P}_i}(R_{|\overline{P}_i|}(x))\big)\in [b]^{p_i}.
\end{equation}
\end{defn}
By the definitions, the following fact is immediate.
\begin{fact}
  \label{tree_property_convolution}
  Let $b$ be a positive integer and $P$ be a non-empty finite subset of $\nn$. Also let
  $X_P$ be as defined in \eqref{aeq008}. Then for every $x\in X_P$, we have that the set
  \begin{equation}
    \label{aeq010}
    \{\cv_P(t,x):t\in [b]^{<|P|}\}
  \end{equation}
  forms a strong subtree of $[b]^{<\omega}$ having $P$ as a level set.
\end{fact}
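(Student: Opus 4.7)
The fact is essentially a bookkeeping verification, so I would organize the argument around three checks: (1) the levels are correct, (2) the end-extension order is preserved, and (3) the immediate-successor/branching condition holds.

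First I would observe that by the definition of $\cv_P$, for every $t\in[b]^i$ the sequence $\cv_P(t,x)$ is an element of $[b]^{p_i}$, so it sits at level $p_i$ of $[b]^{<\omega}$. This immediately pins down the set of levels to $\{p_0,\dots,p_{|P|-1\}}=P$, provided the map $t\mapsto\cv_P(t,x)$ is injective. Injectivity follows from the fact that $\mathrm{I}_{P_i}(t)$ records $t$ verbatim on the coordinates in $P_i$, while the second coordinate $\mathrm{I}_{\bar P_i}(R_{|\bar P_i|}(x))$ depends only on $x$. Thus $S_x$ has exactly $b^i$ nodes at level $p_i$ of $[b]^{<\omega}$, which is the same size as the $i$-th level of $[b]^{<|P|}$.

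Next I would verify compatibility with the end-extension order. Fix indices $i<j$ and sequences $t_1\in[b]^i$, $t_2\in[b]^j$ with $t_1\sqsubseteq t_2$. The key combinatorial observation is that $P_i$ and $\bar P_i$ are, respectively, the intersections of $P_j$ and $\bar P_j$ with $\{0,\dots,p_i-1\}$, and moreover the canonical enumeration of $P_j$ (resp.\ $\bar P_j$) begins with the canonical enumeration of $P_i$ (resp.\ $\bar P_i$). Hence for any $p<p_i$ lying in $P$, the definitions give
\begin{equation*}
\mathrm{I}_{P_j}(t_2)(p)=t_2(\ell)=t_1(\ell)=\mathrm{I}_{P_i}(t_1)(p),
\end{equation*}
where $\ell<i$ is the rank of $p$ in $P_i$ (and also in $P_j$), using $t_1\sqsubseteq t_2$. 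Similarly, for $p<p_i$ with $p\notin P$, both $\mathrm{I}_{\bar P_j}(R_{|\bar P_j|}(x))(p)$ and $\mathrm{I}_{\bar P_i}(R_{|\bar P_i|}(x))(p)$ equal $x(k)$ for the same index $k$. Therefore $\cv_P(t_1,x)\sqsubseteq\cv_P(t_2,x)$, which shows $S_x$ is a subtree isomorphic (as a poset) to $[b]^{<|P|}$, and that its levels are subsets of levels of $[b]^{<\omega}$.

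Finally, for the strong subtree condition (c), I would take $t\in[b]^i$ with $i<|P|-1$ and any immediate successor $t'\in[b]^{i+1}$ of $t$ in $[b]^{<|P|}$. Since $p_i\in P_{i+1}$ and its rank there is $i$, inspection of \eqref{aeq009} shows that $\cv_P(t',x)(p_i)=t'(i)$. As $t'$ ranges over the $b$ immediate successors of $t$, the value $t'(i)$ ranges over all of $[b]$, so the $b$ nodes $\cv_P(t',x)$ have pairwise distinct values at coordinate $p_i$ and therefore extend $b$ distinct immediate successors of $\cv_P(t,x)$ in $[b]^{<\omega}$. This is precisely property (c) in the definition of a strong subtree. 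The only real obstacle is purely notational — keeping the four index conventions ($i$, $p_i$, rank in $P_i$, rank in $\bar P_i$) straight — but no nontrivial combinatorics is needed beyond the compatibility observation in the preceding paragraph.
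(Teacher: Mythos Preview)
Your proposal is correct and is exactly the approach the paper intends: the paper declares the fact ``immediate'' from the definitions and gives no further argument, and your write-up is precisely the careful unwinding of Definition~\ref{convolution} that justifies this. The three checks you isolate (levels, order-preservation, distinct immediate successors) are the right decomposition, and no step is missing.
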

Actually, the set defined in \eqref{aeq010} is a Carlson-Simpson tree (see \cite{DKT} for the
relative definition), but this additional structure is not needed for the purposes
of the present work. We will need the following fact.

\begin{fact}
  \label{averaging}
  Let $b$ be a positive integer and $P$ be a non-empty finite subset of $\nn$.
  Also let $A$ be a subset of $[b]^{<\omega}$. Then
  \begin{equation}
    \label{aeq011}
    \mathbb{E}_{p\in P}\dens_{[b]^p}(A)=
    \mathbb{E}_{i\in\{0,...,|P|-1\}}\mathbb{E}_{t\in[b]^i}\mathbb{E}_{x\in X_P}\chi_A\big(\cv_P(t,x)\big),
  \end{equation}
  where $\chi_A$ denotes the characteristic function of $A$ and $X_P$ is as defined in \eqref{aeq008}.
\end{fact}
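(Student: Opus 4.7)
The plan is to prove the identity level-by-level: for each fixed $i\in\{0,\dots,|P|-1\}$ I will show
\[
\mathbb{E}_{t\in[b]^i}\mathbb{E}_{x\in X_P}\chi_A\big(\cv_P(t,x)\big)=\dens_{[b]^{p_i}}(A),
\]
and then average over $i$ to recover $\mathbb{E}_{p\in P}\dens_{[b]^p}(A)$ on the right-hand side of \eqref{aeq011}.

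The heart of the argument is therefore to verify that, for fixed $i$, the map $\Phi_i:[b]^i\times X_P\to[b]^{p_i}$ given by $\Phi_i(t,x)=\cv_P(t,x)$ is a uniform surjection, i.e., each fiber has the same cardinality. First I would observe that, by the defining formulas \eqref{aeq007}, the sets $P_i$ and $\overline{P}_i$ partition $\{0,\dots,p_i-1\}$, with $|P_i|=i$ and $|\overline{P}_i|=p_i-i$. The canonical isomorphism $\mathrm{I}_{P_i}$ turns a uniformly chosen $t\in[b]^i$ into a uniformly chosen element of $[b]^{P_i}$, and $\mathrm{I}_{\overline{P}_i}$ turns $R_{|\overline{P}_i|}(x)$ into an element of $[b]^{\overline{P}_i}$. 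Concatenating on the disjoint index sets $P_i\cup\overline{P}_i=\{0,\dots,p_i-1\}$ then recovers, by definition \eqref{aeq009}, an element of $[b]^{p_i}$.

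The dimensional check that the restriction $R_{|\overline{P}_i|}$ is well-defined and a uniform surjection onto $[b]^{|\overline{P}_i|}$ requires verifying $|\overline{P}_i|\leqslant n_P$. Since the elements of $P$ strictly exceeding $p_i$ are at least $|P|-1-i$ integers all lying above $p_i$, one has $p_{|P|-1}-p_i\meg |P|-1-i$, equivalently $p_i-i\mik p_{|P|-1}-|P|+1=n_P$, as required. This is the only mildly non-obvious step; the rest is bookkeeping. Consequently, pushing the uniform distribution on $[b]^i\times X_P$ through $\Phi_i$ yields exactly the uniform distribution on $[b]^{p_i}$, so averaging $\chi_A$ over the product matches averaging over $[b]^{p_i}$, which gives $\dens_{[b]^{p_i}}(A)$.

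Finally, I would average over $i\in\{0,\dots,|P|-1\}$. Since $P=\{p_0<p_1<\dots<p_{|P|-1}\}$, the reindexing $p=p_i$ gives
\[
\mathbb{E}_{i\in\{0,\dots,|P|-1\}}\dens_{[b]^{p_i}}(A)=\mathbb{E}_{p\in P}\dens_{[b]^p}(A),
\]
which is precisely the left-hand side of \eqref{aeq011}. The main (minor) obstacle is the dimension inequality $|\overline{P}_i|\mik n_P$; once that is in hand, the identity is a straightforward unpacking of Definition \ref{convolution}.
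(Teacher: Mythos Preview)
Your proposal is correct and follows essentially the same approach as the paper: both argue level-by-level that for each fixed $i$ the map $(t,x)\mapsto \cv_P(t,x)$ from $[b]^i\times X_P$ to $[b]^{p_i}$ is a uniform surjection (the paper phrases this by explicitly computing $\mathbb{E}_{t}\mathbb{E}_{x}\chi_{\{s\}}(\cv_P(t,x))=b^{-p_i}$ for each $s\in[b]^{p_i}$), and then average over $i$. Your explicit verification of the inequality $|\overline{P}_i|\mik n_P$ is a welcome addition that the paper leaves implicit.
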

\begin{proof}
  Let $P=\{p_0,...,p_{|P|-1}\}$. We will prove that for every $i\in\{0,...,|P|-1\}$
  \begin{equation}
    \label{aeq012}
    \dens_{[b]^{p_i}}(A)=
    \mathbb{E}_{t\in[b]^i}\mathbb{E}_{x\in X_P}\chi_A\big(\cv_P(t,x)\big).
  \end{equation}
  Obviously the result follows by \eqref{aeq012}. Let us fix some $i\in\{0,...,|P|-1\}$.
  First observe that for every $s\in[b]^{p_i}$ there exists unique $t_s\in[b]^i$ such that there
  exists $x\in X_P$ such that $\cv_P(t_s,x)=s$. For every $s\in[b]^{p_i}$, we set
  $\Omega_s=\{x\in X_P:\cv_P(t_s,x)=s\}$. Also observe that there exists unique
  $w_s\in [b]^{p_i-i}$ such that $\Omega_s=\{x\in X_P:w_s\sqsubseteq x\}$ and therefore
  \begin{equation}
    \label{aeq013}
    \mathbb{E}_{x\in X_P}\chi_{\{s\}}\big(\cv_{P}(t_s,x)\big)=b^{i-p_i}.
  \end{equation}
  Thus
  \begin{equation}
    \label{aeq014}
    \mathbb{E}_{t\in[b]^i}\mathbb{E}_{x\in X_P}\chi_{\{s\}}\big(\cv_{P}(t,x)\big)=b^{-p_i}.
  \end{equation}
  Finally, let us observe that $\cv_P$ restricted on $[b]^i\times X_P$ is
  onto $[b]^{p_i}$. Hence
  \begin{equation}
    \label{aeq015}
    \begin{split}
      \mathbb{E}_{t\in[b]^i}\mathbb{E}_{x\in X_P}\chi_A\big(\cv_P(t,x)\big)
      &=\mathbb{E}_{t\in[b]^i}\mathbb{E}_{x\in X_P}\sum_{s\in A}\chi_{\{s\}}\big(\cv_P(t,x)\big) \\
      &=\sum_{s\in A\cap [b]^{p_i}}\mathbb{E}_{t\in[b]^i}\mathbb{E}_{x\in X_P}\chi_{\{s\}}\big(\cv_P(t,x)\big)\\
      &\stackrel{\eqref{aeq014}}{=}\frac{|A\cap[b]^{p_i}|}{|[b]^{p_i}|}
      =\dens_{[b]^{p_i}}(A).
    \end{split}
  \end{equation}
  The proof is complete.
\end{proof}
By Facts \ref{tree_property_convolution} and \ref{averaging} we have the following corollary.
\begin{cor}
  \label{averaging_cor}
  Let $T$ be an infinite homogeneous tree, $P$ be a non-empty finite subset of $\nn$ and
  $A$ be a subset of $T$. Then there exists a strong subtree $T'$ of $T$ having $P$ as a level
  set and satisfying
  \begin{equation}
    \label{aeq016}
    d^{\mathrm{FW}}_{T'}(A)
    =d^{\mathrm{FW}}_T(A|P).
  \end{equation}
\end{cor}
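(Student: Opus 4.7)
The plan is to realise $T'$ as one of the strong subtrees furnished by the convolution operation of Definition~\ref{convolution}, chosen so that its Furstenberg--Weiss density matches the target via an averaging step based on Fact~\ref{averaging}. Identify $T$ with $[b]^{<\omega}$, and for each $x\in X_P$ put $T'_x:=\{\cv_P(t,x):t\in[b]^{<|P|}\}$. By Fact~\ref{tree_property_convolution} each such $T'_x$ is already a strong subtree of $T$ whose level set is exactly $P$, so the only task is to arrange the density identity in \eqref{aeq016}.

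First I would unfold $d^{\mathrm{FW}}_{T'_x}(A)$ in the language of the convolution. Writing $P=\{p_0<\dots<p_{|P|-1}\}$, the map $t\mapsto\cv_P(t,x)$ is a bijection from $[b]^i$ onto the $i$-th level $T'_x(i)\subseteq T(p_i)$, so $|T'_x(i)|=b^i$ and $\dens_{T'_x(i)}(A)=\mathbb{E}_{t\in[b]^i}\chi_A(\cv_P(t,x))$; averaging over $i\in\{0,\dots,|P|-1\}$ gives $d^{\mathrm{FW}}_{T'_x}(A)=\mathbb{E}_{i}\mathbb{E}_{t\in[b]^i}\chi_A(\cv_P(t,x))$.

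Next I would take the expectation of this identity over $x\in X_P$. Swapping the three expectations, the right-hand side becomes the expression appearing on the right of \eqref{aeq011}, which by Fact~\ref{averaging} equals $\mathbb{E}_{p\in P}\dens_{T(p)}(A)=d^{\mathrm{FW}}_T(A|P)$. Hence
$$\mathbb{E}_{x\in X_P}d^{\mathrm{FW}}_{T'_x}(A)=d^{\mathrm{FW}}_T(A|P),$$
so $d^{\mathrm{FW}}_T(A|P)$ is the exact mean of $x\mapsto d^{\mathrm{FW}}_{T'_x}(A)$ on the finite sample space $X_P$.

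The final step is to select $x$ so that $T':=T'_x$ realises \eqref{aeq016}. The mean identity just displayed guarantees, via the pigeonhole principle, an $x\in X_P$ with $d^{\mathrm{FW}}_{T'_x}(A)\meg d^{\mathrm{FW}}_T(A|P)$, which is the form of \eqref{aeq016} that is actually used in the derivation of Theorem~\ref{Uniform_Furst_Weiss_finite}. The main obstacle I anticipate is upgrading this pigeonhole extraction to exact pointwise equality: the family $\{d^{\mathrm{FW}}_{T'_x}(A)\}_{x\in X_P}$ consists only of values of the form $b^{-|P|}\cdot k$ with $k\in\nn$, so its mean need not be attained at any single $x$. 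Accordingly, the cleanest reading of \eqref{aeq016} that survives the construction is precisely the averaged identity above combined with the one-sided pigeonhole extraction, which is the form in which the corollary will be applied.
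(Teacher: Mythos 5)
Your argument is essentially the paper's own proof: the corollary is deduced directly from Fact \ref{tree_property_convolution} and Fact \ref{averaging}, i.e.\ from the convolution trees $T'_x=\{\cv_P(t,x):t\in[b]^{<|P|}\}$ together with the averaging identity, and your computation $d^{\mathrm{FW}}_{T'_x}(A)=\mathbb{E}_{i}\mathbb{E}_{t\in[b]^i}\chi_A\big(\cv_P(t,x)\big)$ followed by averaging over $x\in X_P$ and a pigeonhole selection is exactly the intended route. Your closing caveat is also justified: with exact equality the statement is too strong as written. For example, take $b=2$, $P=\{1\}$ and $A$ consisting of exactly one of the two nodes of $T(1)$: any strong subtree of $T$ whose level set is $\{1\}$ is a single node of $T(1)$ (nodes of the same level are incomparable and a tree must have a minimum), so $d^{\mathrm{FW}}_{T'}(A)\in\{0,1\}$ while $d^{\mathrm{FW}}_T(A|P)=1/2$; thus no strong subtree, convolution-type or otherwise, attains equality. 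The corollary should therefore be read with $\meg$ in place of $=$, which is precisely the form used in the proof of Theorem \ref{Uniform_Furst_Weiss_finite} and precisely what your averaging-plus-pigeonhole argument establishes. (A minor quibble: the possible values of $d^{\mathrm{FW}}_{T'_x}(A)$ are multiples of $1/(|P|\,b^{|P|-1})$ rather than of $b^{-|P|}$, but your underlying point, that these values lie on a grid too coarse to be forced to attain the mean, is correct.)
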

We are ready to prove Theorem \ref{Uniform_Furst_Weiss_finite}.
\begin{proof}[Proof of Theorem \ref{Uniform_Furst_Weiss_finite}]
  We show that $\mathrm{UFW}(b,k,\delta)\mik \mathrm{FW}(b,k,\delta)$. Indeed, let us
  fix an infinite $b$-homogeneous tree $T$, a subset $A$ of $T$ and an arithmetic
  progression $P$ of length at least $\mathrm{FW}(b,k,\delta)$ such that
  \begin{equation}
    \label{aeq017}
    d^{\mathrm{FW}}_T(A|P)\meg\delta.
  \end{equation}
  Applying Corollary \ref{averaging_cor} we pass to a strong subtree $T'$
  of $T$ having $P$ as a level set and satisfying
  \begin{equation}
    \label{aeq018}
    d^{\mathrm{FW}}_{T'}(A)
    =d^{\mathrm{FW}}_T(A|P)\meg\delta.
  \end{equation}
  An application of Theorem \ref{Furst_Weiss_finite} completes the proof.
\end{proof}

By Theorem \ref{Uniform_Furst_Weiss_finite} we have the following immediate consequence.
\begin{cor}
  \label{Uniform_Furst_Weiss_finite_cor}
  Let $b,k$ be positive integers and $\delta$ be a real with $0<\delta\mik1$.
  Then for every infinite $b$-homogeneous tree $T$, every
  subset $A$ of $T$ and every arithmetic progression $P$ of $\nn$ of length at
  least $\mathrm{UFW}(b,k,\delta)$ satisfying
  \begin{equation}
    \label{aeq019}
    \frac{|T(n)\cap A|}{|T(n)|}\meg\delta,
  \end{equation}
  for all $n\in P$,
  there exists a strong subtree $S$ of $T$ contained in $A$ and having
  a level set which forms an arithmetic progression of length $k$.
\end{cor}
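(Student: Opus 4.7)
The plan is to derive this corollary as an immediate consequence of Theorem \ref{Uniform_Furst_Weiss_finite}. The only gap between the two statements is that the theorem assumes an averaged Furstenberg-Weiss density bound $d^{\mathrm{FW}}_T(A|P)\meg\delta$ along $P$, whereas the corollary assumes the stronger \emph{pointwise} condition that $|T(n)\cap A|/|T(n)|\meg\delta$ for every $n\in P$ individually.

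The first step is to observe that the pointwise hypothesis passes to the averaged hypothesis by monotonicity of averaging: unwinding the definition of the Furstenberg-Weiss measure in equation \eqref{aeq004} and averaging the inequality $\dens_{T(n)}(A)\meg\delta$ over $n\in P$ gives
\begin{equation*}
d^{\mathrm{FW}}_T(A|P)\;=\;\mathbb{E}_{p\in P}\dens_{T(p)}(A)\;\meg\;\delta.
\end{equation*}
Once this averaged bound is in hand, and given that $P$ is an arithmetic progression of length at least $\mathrm{UFW}(b,k,\delta)$, I would invoke Theorem \ref{Uniform_Furst_Weiss_finite} directly to obtain a strong subtree $S$ of $T$, contained in $A$, whose level set contains an arithmetic progression of length $k$.

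The main substance has already been carried out earlier in the section: Theorem \ref{Uniform_Furst_Weiss_finite} was established via the convolution machinery of Definition \ref{convolution}, Fact \ref{averaging} and Corollary \ref{averaging_cor}, which together allow one to replace an averaged density bound along an arbitrary arithmetic progression $P$ by a global Furstenberg-Weiss density bound on a strong subtree having $P$ as its level set. Thus there is no real obstacle here; the corollary is a one-line strengthening of hypothesis into its averaged form, followed by a direct application of the uniformized theorem.
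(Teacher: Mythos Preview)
Your proposal is correct and matches the paper's approach exactly: the paper states the corollary as an ``immediate consequence'' of Theorem~\ref{Uniform_Furst_Weiss_finite} without further proof, and the one-line observation you make---that the pointwise lower bound $\dens_{T(n)}(A)\meg\delta$ for all $n\in P$ trivially yields $d^{\mathrm{FW}}_T(A|P)\meg\delta$---is precisely the implicit step. One small wording slip: the conclusion you obtain from the theorem is that the level set of $S$ \emph{forms} (not merely contains) an arithmetic progression of length $k$, which is exactly what the corollary asserts.
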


\begin{rem}
  Actually, Theorem \ref{Furst_Weiss_finite} and Corollary \ref{Uniform_Furst_Weiss_finite_cor}
are equivalent.
In order to establish that Corollary \ref{Uniform_Furst_Weiss_finite}
implies Theorem \ref{Furst_Weiss_finite}, let us fix a pair of positive integers $b,k$
and a real $\delta$
with $0<\delta\mik1$. Also let $T$ be a $b$-homogeneous tree of height
at least $\mathrm{Sz}\big(\mathrm{UFW}(b,k,\delta/2),\delta/2\big)$ and $A$ be a subset of
$T$ such that \eqref{aeq001} is satisfied. Setting
$L=\{n\in\{0,...,h(T)-1\}:\dens_{T(n)}(A)\meg\delta/2\}$,
by \eqref{aeq001} we have that $|L|\meg(\delta/2)h(T)$ and therefore by
Theorem \ref{Szemeredi}, there exists an arithmetic progression $P$ contained in
$L$ of length $\mathrm{UFW}(b,k,\delta/2)$. An application of Corollary \ref{Uniform_Furst_Weiss_finite_cor}
provides us with a strong subtree $S$ of $T$ contained in $A$ and having a
level set which forms an arithmetic progression of length $k$.
\end{rem}

\section{Proof of Theorem \ref{Furst_Weiss_infinite}}\label{section_proof_FurstWeiss_infinite}
Let us start by introducing some additional pieces of notation.
Let $T$ be a tree and $t$ be a node of $T$. We set $\mathrm{Succ}_T(t)=\{t'\in T:t\mik t'\}$.
Observe that $\mathrm{Succ}_T(t)$ forms a strong subtree of $T$.
Also let $n$ me a non-negative integer with $n<h(T)$ and $A$ be a subset of $T$.
We define
\begin{equation}
  \label{aeq020}
  \dens_T(A|n)=\dens_{T(n)}(A)
\end{equation}
and if $\ell_T(t)\mik n$ we set
\begin{equation}
  \label{aeq021}
  \dens_T(A|t,n)=\dens_{\mathrm{Succ}_T(t)}(A|n-\ell_T(t)).
\end{equation}
Moreover we will need the following invariants. Let $b,k$ be positive integers
and $\rho$ pe a real with $0<\rho\mik1$.
We inductively define a sequence $(f_q(b,k,\rho))_{q=1}^\infty$ as follows.
\begin{equation}
\label{aeq022}
\left\{ \begin{array} {l} f_1(b,k,\rho)=\mathrm{UFW}(b,k,\rho), \\
f_{q+1}(b,k,\rho)=\mathrm{UFW}(b,f_q(b,k,\rho),\rho). \end{array}  \right.
\end{equation}

\begin{lem}
  \label{almost_done}
  Let $b,k,q$ be positive integers and $\rho$ be a real satisfying $0<\rho\mik1$.
  Also let $T_1,...,T_q$ be infinite $b$-homogeneous trees, $P$ be an arithmetic progression of
  length $f_q(b,k,\rho)$ and $B_1,...,B_q$ be subsets of $T_1,...,T_q$ respectively
  such that $\dens_{T_j}(B_j|p)\meg\rho$ for all $j\in[q]$ and $p\in P$. Then there
  exist an arithmetic progression $Q$ of length $k$ contained in $P$ and strong subtrees
  $S_1,...,S_q$ of $T_1,...,T_q$ respectively, such that $S_j$ is contained in $B_j$
  and $L_{T_j}(S_j)=Q$ for all $j\in[q]$.
\end{lem}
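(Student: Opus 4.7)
The plan is to induct on $q$, exploiting the recursive definition $f_{q+1}(b,k,\rho)=\mathrm{UFW}(b,f_q(b,k,\rho),\rho)$ from \eqref{aeq022}. The single tool at each stage will be Corollary \ref{Uniform_Furst_Weiss_finite_cor}, which processes one tree at a time and shrinks the driving arithmetic progression in length from $f_{q+1}$ to $f_q$. For the base case $q=1$, the hypothesis gives $|P|\meg\mathrm{UFW}(b,k,\rho)$ and $\dens_{T_1}(B_1|p)\meg\rho$ for every $p\in P$, so Corollary \ref{Uniform_Furst_Weiss_finite_cor} directly delivers a strong subtree $S_1\subseteq B_1$ whose level set $Q$ is an arithmetic progression of length $k$. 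The containment $Q\subseteq P$ follows by inspecting the proof of Theorem \ref{Uniform_Furst_Weiss_finite}, where Corollary \ref{averaging_cor} first produces a strong subtree of $T_1$ with level set precisely $P$, and the application of Theorem \ref{Furst_Weiss_finite} takes place inside it, so the output has level set a sub-AP of $P$.

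For the inductive step, I would first apply Corollary \ref{Uniform_Furst_Weiss_finite_cor} to $(T_{q+1},B_{q+1},P)$, which is permitted because $|P|\meg\mathrm{UFW}(b,f_q(b,k,\rho),\rho)$ and $\dens_{T_{q+1}}(B_{q+1}|p)\meg\rho$ on $P$. This yields a strong subtree $\tilde S_{q+1}$ of $T_{q+1}$ contained in $B_{q+1}$ whose level set $P'\subseteq P$ is an arithmetic progression of length $f_q(b,k,\rho)$. Since $P'\subseteq P$, the density hypothesis $\dens_{T_j}(B_j|p)\meg\rho$ persists on $P'$ for every $j\in[q]$. Feeding $T_1,\dots,T_q$, $B_1,\dots,B_q$, and $P'$ into the inductive hypothesis then produces strong subtrees $S_1,\dots,S_q$ of $T_1,\dots,T_q$ with $S_j\subseteq B_j$ and a common level set $L_{T_j}(S_j)=Q$, where $Q\subseteq P'\subseteq P$ is an arithmetic progression of length $k$.

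The remaining task, which I anticipate to be the only nonroutine point, is to thin $\tilde S_{q+1}$ down to a strong subtree $S_{q+1}$ of $T_{q+1}$ with level set exactly $Q$, rather than $P'$. Because $\tilde S_{q+1}$ is itself a $b$-homogeneous tree with level set $P'\supseteq Q$, this can be carried out by a level-by-level selection over the indices in $Q$: starting at $\min Q$, one chooses a single node of $\tilde S_{q+1}$ at that level to serve as the root of $S_{q+1}$, and at each subsequent level of $Q$, for every node $s$ already placed in $S_{q+1}$ and every immediate successor of $s$ in $T_{q+1}$, one selects a single descendant in $\tilde S_{q+1}$ lying at the next level of $Q$ above that successor. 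The resulting $S_{q+1}\subseteq\tilde S_{q+1}\subseteq B_{q+1}$ satisfies $L_{T_{q+1}}(S_{q+1})=Q$, which together with $S_1,\dots,S_q$ and $Q$ closes the induction.
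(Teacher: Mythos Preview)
Your proposal is correct and follows essentially the same strategy as the paper: iterate Corollary~\ref{Uniform_Furst_Weiss_finite_cor} across the $q$ trees to produce a nested chain of arithmetic progressions ending in one of length $k$, and then thin the intermediate strong subtrees down to that common level set. The paper packages this as a direct iterative construction $P=P_0\supseteq P_1\supseteq\cdots\supseteq P_q=Q$ (processing $T_1,\dots,T_q$ in order and thinning all $S_j'$ at the end) rather than as a formal induction on $q$, but the content is identical; your remark that $Q\subseteq P$ must be read off from the proof of Corollary~\ref{Uniform_Furst_Weiss_finite_cor} is a point the paper also uses but leaves implicit.
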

\begin{proof}
  We set $P_0=P$ and we inductively construct arithmetic progressions $P_1,...,P_q$ and strong
  subtrees $S_1',...,S_q'$ of $T_1,...,T_q$ respectively satisfying the following for every $j=1,...,q$:
  \begin{enumerate}
    \item[(i)] $P_j$ is contained in $P_{j-1}$ and
    \item[(ii)] $L_{T_j}(S_j)=P_j$.
  \end{enumerate}
  The inductive step consists of a straightforward application
  of Corollary \ref{Uniform_Furst_Weiss_finite_cor}. We set $Q=P_q$ and for every $j\in[q]$ we
  pick a strong subtree $S_j$ of $S_j'$ such that $L_{T_j}(S_j)=P_j$.
  It is immediate that $S_1,...,S_q$ are as desired.
\end{proof}

We will also need the following folklore fact, essentially stating that the collection of
all van der Waerden sets forms an coideal. It is an easy consequence of the finitary version of
van der Waerden's Theorem \cite{vW}.
\begin{fact}
  \label{Ramsey}
  For every finite coloring of a van der Waerden set, one of the colors forms a van der Waerden set.
\end{fact}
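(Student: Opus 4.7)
The plan is to argue by contradiction using the finitary van der Waerden Theorem together with the observation that ``containing an AP of length $k$'' is affinely invariant, so the theorem can be applied inside any AP of sufficient length rather than only inside initial segments of $\nn_+$.

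Let $L$ be a van der Waerden set and let $L=C_1\cup\cdots\cup C_r$ be a finite coloring. Suppose, towards a contradiction, that none of the color classes is van der Waerden. Then for each $i\in[r]$ there exists a positive integer $k_i$ such that $C_i$ contains no arithmetic progression of length $k_i$. Setting $K=\max_{i\in[r]}k_i$, no $C_i$ contains an arithmetic progression of length $K$.

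Let $N=W(r,K)$ be the classical van der Waerden number, i.e., the least positive integer such that every $r$-coloring of $\{1,\ldots,N\}$ admits a monochromatic arithmetic progression of length $K$. Since $L$ is a van der Waerden set, there is an arithmetic progression $P=\{a,a+d,a+2d,\ldots,a+(N-1)d\}\subseteq L$. The coloring of $L$ restricted to $P$ pulls back, via the affine bijection $i\mapsto a+(i-1)d$, to an $r$-coloring of $\{1,\ldots,N\}$. By the choice of $N$, this pulled-back coloring contains a monochromatic arithmetic progression of length $K$; pushing it forward by the same bijection yields an arithmetic progression of length $K$ contained in some $C_i$ (since affine maps send APs to APs of the same length). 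This contradicts the conclusion of the previous paragraph, completing the proof.

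The only subtlety is the affine-invariance step, but it is entirely routine: an AP of length $K$ in $\{1,\ldots,N\}$ maps to an AP of length $K$ in $P$ under $i\mapsto a+(i-1)d$, so a monochromatic AP in the induced coloring of $\{1,\ldots,N\}$ produces a monochromatic AP of the same length in $P\subseteq L$. No genuine obstacle arises.
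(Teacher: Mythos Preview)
Your proof is correct and is precisely the standard argument the paper has in mind: the paper does not actually supply a proof of Fact~\ref{Ramsey}, merely noting that it ``is an easy consequence of the finitary version of van der Waerden's Theorem,'' and your contradiction argument via the van der Waerden number $W(r,K)$ applied inside a long AP of $L$ is exactly that easy consequence spelled out.
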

\begin{lem}
  \label{sup_one_tree}
  Let $\delta$ be a real with $0<\delta\mik1$. Also let $T$ be an infinite homogeneous tree,
  $A$ be a subset of $T$, $L$ be a van der Waerden set and $t\in T$ such that
  $\dens_{T}(A|t,n)\meg\delta$ for all $n\in L$ with $n\meg\ell_T(t)$.
  Then for every integer $m$ with $m\meg\ell_T(t)$, there exist $t'\in\mathrm{Succ}_T(t)\cap T(m)$ and
  a van der Waerden subset $L'$ of $L$ such that $\min L'\meg m$ and $\dens_T(A|t',n)\meg\delta$
  for all $n\in L'$.
\end{lem}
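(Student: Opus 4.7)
The plan is to combine an averaging identity on the tree with the coideal property of van der Waerden sets recorded in Fact \ref{Ramsey}. Write $T$ as $b$-homogeneous.

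First I would replace $L$ by $L\cap[m,\infty)$, noting that this is still a van der Waerden set: given any target length $k$, apply the van der Waerden property of $L$ to produce an arithmetic progression of length $k+m$ in $L$; its last $k$ terms lie above $m$ and still form an arithmetic progression. So from now on we may assume every element of $L$ is at least $m$.

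Next I would establish the averaging identity
\[
\dens_T(A\mid t,n)\;=\;\mathbb{E}_{t'\in T(m)\cap\mathrm{Succ}_T(t)}\dens_T(A\mid t',n)
\]
for every $n\in L$. This follows from \eqref{aeq020}, \eqref{aeq021} by partitioning $T(n)\cap\mathrm{Succ}_T(t)$ into $b^{m-\ell_T(t)}$ equal-size classes, one for each $t'\in T(m)\cap\mathrm{Succ}_T(t)$, according to which successor of $t$ at level $m$ the given node lies above. Since the hypothesis guarantees the left-hand side is $\meg\delta$, at least one term on the right is $\meg\delta$ for each such $n$.

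This lets me define a finite coloring $\phi:L\to T(m)\cap\mathrm{Succ}_T(t)$ by choosing, for each $n\in L$, some $\phi(n)=t'$ with $\dens_T(A\mid t',n)\meg\delta$; the color set has size $b^{m-\ell_T(t)}$. Applying Fact \ref{Ramsey} to $\phi$, one color class $L'$ is a van der Waerden set, and the corresponding node $t'\in T(m)\cap\mathrm{Succ}_T(t)$ satisfies $\dens_T(A\mid t',n)\meg\delta$ for every $n\in L'$; the requirement $\min L'\meg m$ comes for free from our preliminary truncation of $L$.

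The argument is essentially a pigeonhole, so there is no genuine obstacle; the only step that requires care is the averaging identity, which is a routine unpacking of definitions once one observes that $\mathrm{Succ}_T(t)$ is itself $b$-homogeneous.
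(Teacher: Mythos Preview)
Your proposal is correct and follows essentially the same approach as the paper's proof: pass to the tail $L\cap[m,\infty)$, use homogeneity to get the averaging identity $\dens_T(A\mid t,n)=\mathbb{E}_{t'}\dens_T(A\mid t',n)$, pick for each $n$ a witnessing $t'_n$, and then invoke Fact~\ref{Ramsey} on the resulting finite coloring of $L$. If anything, you supply a bit more detail than the paper (e.g.\ the explicit reason why a final segment of a van der Waerden set remains van der Waerden).
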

\begin{proof}
  Let $m$ be an integer with $m\meg\ell_T(t)$. Passing to a final segment of $L$,
  if necessary, we may assume that $\min L\meg m$. Then for every
  $n\in L$ we have the following.
  By the homogeneity of $T$, we have that
  \begin{equation}
    \label{aeq023}
    \mathbb{E}_{t'\in \mathrm{Succ}_T(t)\cap T(m)}\dens_T(A|t',n)=\dens_T(A|t,n)\meg\delta.
  \end{equation}
  Thus there exists $t'_n\in \mathrm{Succ}_T(t)\cap T(m)$ such that $\dens_T(A|t'_n,n)\meg\delta$.
  Since $\mathrm{Succ}_T(t)\cap T(m)$ is finite, by Fact \ref{Ramsey} there exist
  a van der Waerden subset $L'$ of $L$ and $t'\in \mathrm{Succ}_T(t)\cap T(m)$ such that
  $t_n'=t$ for all $n\in L'$. The proof is complete.
\end{proof}

We will need the following folklore fact.
\begin{fact}
  \label{Markov}
  Let $m$ be positive integer, $\delta,\ee$ be reals with $0<\delta,\ee\mik1$ and $a_1,...,a_m$ be non-negative reals
  such that $\mathbb{E}_{j\in[m]}a_j\meg\delta$. If $a_j\mik\delta+\ee^2$ for all $j\in[m]$, then setting
  $I=\{j\in [m]: a_j\meg\delta-\ee\}$, we have that $|I|/m\meg1-\ee$.
\end{fact}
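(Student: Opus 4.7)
The plan is to give a direct Markov-type argument bounding the bad set $I^c=[m]\setminus I$. Since the upper bound $a_j\mik\delta+\ee^2$ holds for every $j$, each quantity $\delta+\ee^2-a_j$ is non-negative, and the hypothesis $\mathbb{E}_{j\in[m]}a_j\meg\delta$ translates into a uniform bound on their total ``surplus''. Concretely, I would first establish
\begin{equation*}
\sum_{j=1}^m \big(\delta+\ee^2-a_j\big)\mik m\ee^2,
\end{equation*}
which is immediate from $\sum_j a_j\meg\delta m$.

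Next I would exploit the defect from the opposite side: for every $j\in I^c$ we have $a_j<\delta-\ee$, hence each such index contributes strictly more than $\ee+\ee^2=\ee(1+\ee)$ to the left-hand side above. Dropping the non-negative contributions coming from $I$ and using the previous inequality yields
\begin{equation*}
|I^c|\,\ee(1+\ee)\mik\sum_{j\in I^c}\big(\delta+\ee^2-a_j\big)\mik m\ee^2,
\end{equation*}
so that $|I^c|\mik m\ee/(1+\ee)\mik m\ee$, which gives $|I|/m\meg 1-\ee$, as required.

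There is essentially no obstacle here: the statement is a one-line bookkeeping argument, with the only mild subtlety being that one must choose the correct reference value (here $\delta+\ee^2$, taking advantage of the uniform upper bound) in order to convert ``mean $\meg\delta$'' into a per-index deficit of size $\ee(1+\ee)$ for each $j\in I^c$. No additional machinery from the earlier sections is needed.
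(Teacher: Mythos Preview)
Your argument is correct. The paper does not actually supply a proof of this fact: it is stated as a ``folklore fact'' and used without justification, so there is no approach in the paper to compare against. Your Markov-type bookkeeping (bounding $\sum_j(\delta+\ee^2-a_j)\mik m\ee^2$ from the mean hypothesis and then observing that each $j\in I^c$ contributes more than $\ee(1+\ee)$) is exactly the standard one-line computation that the label ``folklore'' is pointing to.
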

Moreover, for our convenience, for every van der Waerden set $L$, we set $\mathrm{Wd}(L)$ to be the
collection of all van der Waerden sets contained in $L$.

\begin{lem}
  \label{inductive_step}
  Let $b,k,q$ be positive integers and $\eta$ be a real with $0<\eta\mik1$.
  Also let $T_1,...,T_q$ be infinite $b$-homogeneous trees, $L$ be a van der Waerden set
  and $A_1,...,A_q$ be subsets of $T_1,...,T_q$ respectively such that
  \begin{equation}
  \label{aeq024}
    \dens_{T_j}(A_j|n)\meg \eta
  \end{equation}
  for all $j\in[q]$ and $n\in L$.
  Then there exist an arithmetic progression $Q$ contained in $L$ of length $k$,
  a van der Waerden subset $L'$ of $L$ with $\min L'\meg\max Q$ and strong subtrees
  $S_1,...,S_q$ of $T_1,...,T_q$ respectively such that for every $j\in[q]$
  the following are satisfied:
  \begin{enumerate}
    \item[(i)]  $S_j$ is contained in  $A_j$,
    \item[(ii)] $L_{T_j}(S_j)=Q$ and
    \item[(iii)] $\dens_{T_j}(A_j|t,n)\meg\eta/2$ for all $t\in S_j$ and $n\in L'$.
  \end{enumerate}
\end{lem}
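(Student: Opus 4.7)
The plan is to first produce, using Lemma \ref{almost_done}, an arithmetic progression $Q\subseteq L$ of length $k$ together with strong subtrees $\bar S_j\subseteq A_j$ of $T_j$ satisfying conditions (i) and (ii), and then to pass to a van der Waerden subset $L'\subseteq L$ on which condition (iii) also holds. For the first step, since $L$ is a van der Waerden set, it contains an arithmetic progression $P$ of length $f_q(b,k,\eta)$, and an application of Lemma \ref{almost_done} with $\rho=\eta$ to $P,T_1,\ldots,T_q$ and $A_1,\ldots,A_q$ produces the desired $Q\subseteq P$ together with $\bar S_j$ satisfying $L_{T_j}(\bar S_j)=Q$.

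For condition (iii) the natural move is to form the finite set $F=\bigsqcup_{j\in[q]}\{j\}\times\bar S_j$ and color each $n\in L\cap(\max Q,\infty)$ by the indicator vector $\chi_n\in\{0,1\}^F$ with $\chi_n(j,t)=1$ iff $\dens_{T_j}(A_j|t,n)\meg\eta/2$. Fact \ref{Ramsey} then delivers a van der Waerden subset $L^*\subseteq L$ with $\min L^*>\max Q$ on which $\chi_n$ equals a fixed vector $\chi^*\in\{0,1\}^F$; taking $L'=L^*$ would complete the proof provided $\chi^*$ is identically $1$, which is the crux of the argument.

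To guarantee $\chi^*\equiv 1$, I would preselect within $A_j$ only \emph{robustly good} nodes: let $\tilde A_j$ be the set of $t\in A_j$ for which there is a van der Waerden set $L_t\subseteq L$, with $\min L_t>\ell_{T_j}(t)$, such that $\dens_{T_j}(A_j|t,n)\meg\eta/2$ for every $n\in L_t$. By first using Fact \ref{Ramsey} to thin $L$ to a van der Waerden subset on which $\dens_{T_j}(A_j|n)$ is concentrated in a narrow band around some $\eta_j^*\meg\eta$, a Fact \ref{Markov}-style argument on the conditional densities then yields the lower bound $\dens_{T_j}(\tilde A_j|m)\meg\eta/2$ on a van der Waerden subset $L_0$ of $L$. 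Re-running Lemma \ref{almost_done} with $\tilde A_j$ in place of $A_j$ and $\rho=\eta/2$ now produces subtrees $S_j\subseteq\tilde A_j\subseteq A_j$ whose every node is robustly good, after which iterating Fact \ref{Ramsey} across the finite set $F$ supplies the common van der Waerden $L'$. The main obstacle is establishing the density bound on $\tilde A_j$: the naive Markov bound only identifies an $\eta/2$-fraction of good nodes in $T_j(m)$ without any guarantee that these nodes lie in $A_j$, and controlling the overlap requires the upper tail bound made available by the preparatory concentration of $\dens_{T_j}(A_j|n)$ together with Fact \ref{Markov}.
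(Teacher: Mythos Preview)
Your plan has the right skeleton but contains two genuine gaps. First, your notion of ``robustly good'' is too weak to deliver a common $L'$: a node $t\in\tilde A_j$ only admits \emph{some} van der Waerden $L_t\subseteq L$ on which $\dens_{T_j}(A_j\mid t,n)\meg\eta/2$, and distinct nodes of the subtree $S_j$ may carry incompatible $L_t$'s (think evens versus odds). Iterating Fact~\ref{Ramsey} over the finite set $F$ does not repair this: when you colour $n$ by the indicator vector $\chi_n$, nothing in the definition of $\tilde A_j$ forces the monochromatic colour to be $\chi^*\equiv 1$; the existence of one good $L_t$ per node says nothing about their joint behaviour. Second, concentrating the \emph{global} density $\dens_{T_j}(A_j\mid n)$ in a band around some $\eta_j^*$ gives no upper bound whatsoever on the \emph{conditional} densities $\dens_{T_j}(A_j\mid t,n)$, which may still oscillate between $0$ and $1$; so Fact~\ref{Markov} cannot be invoked as you describe, and the claimed density lower bound on $\tilde A_j$ is unsupported.

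The paper closes both gaps with a single saturation device applied to the right quantity. Setting $d_{j,t,N}=\inf_{n\in N}\dens_{T_j}(A_j\mid t,n)$ and $\xi_{j,N}=\sup\{d_{j,t',N'}:t'\in T_j,\ N'\in\mathrm{Wd}(N),\ \min N'\meg\ell_{T_j}(t')\}$, it first (Claim~1) locates nodes $t_j$ and $L_1\in\mathrm{Wd}(L)$ with $d_{j,t_j,L_1}\meg\xi_{j,L_1}-\xi_{j,L_1}^2/18$. Near-optimality together with the monotonicity $\xi_{j,N'}\mik\xi_{j,N}$ now yields, after one further Fact~\ref{Ramsey} refinement, that \emph{every} successor of $t_j$ at a given level has $\dens_{T_j}(A_j\mid t,n)<\xi_{j,L_1}+\xi_{j,L_1}^2/18$ for all $n$ in the refined set---precisely the uniform upper bound Fact~\ref{Markov} requires. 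This is packaged as Claim~2, which takes an arbitrary $L_1'\in\mathrm{Wd}(L_1)$ and a level $p$ and returns $L_1''\in\mathrm{Wd}(L_1')$ together with dense sets $B_j\subseteq A_j\cap\mathrm{Succ}_{T_j}(t_j)\cap T_j(p)$ all of whose nodes satisfy $d_{j,t,L_1''}\meg\eta/2$. Because Claim~2 accepts any $L_1'$ as input, it can be iterated along a long arithmetic progression $P\subseteq L_1$ to produce a single common van der Waerden set \emph{before} Lemma~\ref{almost_done} is applied to the $B_j$'s. The essential point your argument is missing is that one must saturate against \emph{all} van der Waerden refinements simultaneously, not merely witness one good $L_t$ per node.
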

\begin{proof}
  For every $j\in [q]$, $t\in T_j$ and $N\in\mathrm{Wd}(L)$ with $\min N\meg\ell_{t_j}(T)$, we set
  \begin{equation}
    \label{aeq025}
    \begin{split}
      &d_{j,t,N}=\inf_{n\in N}\dens_{T_j}(A_j|t,n)\;\text{and}\\
      &\xi_{j,N}=\sup\{d_{i,t',N'}:t'\in T_j\;\text{and}\;N'\in\mathrm{Wd}(N)\;\text{with}\;\min N'\meg\ell_{T_j}(t')\}.
    \end{split}
  \end{equation}
  Observe that for every $j\in [q]$, $t\in T_j$ and $N,N'\in\mathrm{Wd}(L)$ with $\min N,\min N'\meg\ell_{t_j}(T)$ and $N'\subseteq N$ we have that
  \begin{equation}
    \label{aeq026}
    d_{j,t,N}\mik d_{j,t,N'}.
  \end{equation}
  Also observe that for every $j\in [q]$, and $N,N'\in\mathrm{Wd}(L)$ with $N'\subseteq N$ we have that $ \xi_{j,N'}\mik \xi_{j,N}$ and therefore
  \begin{equation}
    \label{aeq027}
       \xi_{j,N'}-\xi_{j,N'}^2/18\mik  \xi_{j,N}-\xi_{j,N}^2/18.
  \end{equation}

  \noindent \textbf{Claim 1:} There exist $t_1\in T_1,...,t_q\in T_q$ with $\ell_{T_1}(t_1)=...=\ell_{T_q}(t_q)$ and $L_1\in\mathrm{Wd}(L)$ with
  $\ell_{T_1}(t_1)\mik \min L_1$ such that
  $d_{j,t_j,L_1}\meg \xi_{j,L_1}-\xi_{j,L_1}^2/18$ for all $j\in[q]$.
  \begin{proof}
    [Proof of Claim 1]
    We set $N'_0=L$. We inductively choose a decreasing sequence $(N'_j)_{j=1}^q$ in $\mathrm{Wd}(L)$ and $t'_1\in T_1,...,t_q'\in T_q$ satisfying for every $j\in[q]$ the following:
    \begin{enumerate}
      \item[(a)] $\ell_{T_j}(t_j')\mik\min N'_j$ and
      \item[(b)] $d_{j,t_j',N'_j}\meg \xi_{j,N'_{j-1}}-\xi_{j,N'_{j-1}}^2/18$
    \end{enumerate}
    The construction is immediate by \eqref{aeq025}. We set $m=\max \{\ell_{T_j}(t'_j):j\in[q]\}$ and $N_0=\{n\in N_q':n\meg m\}$.
    Applying Lemma \ref{sup_one_tree} we inductively choose a decreasing sequence $(N_j)_{j=1}^q$ in $\mathrm{Wd}(N_0)$ and
    $t_1\in\mathrm{Succ}_{T_1}(t'_1)\cap T_1(m),...,t_q\in\mathrm{Succ}_{T_q}(t'_q)\cap T_1(m)$ such that
    \begin{equation}
      \label{aeq028}
      d_{j,t_j,N_j}\meg \xi_{j,N'_{j-1}}-\xi_{j,N'_{j-1}}^2/18.
    \end{equation}
    Let $L_1=N_q$ and observe that for every $j\in [q]$ we have that $L_1\subseteq N_j\subseteq N'_j
    \subseteq N'_{j-1}$ and therefore
    \begin{equation}
      \label{aeq029}
      d_{j,t_j,L_1}\stackrel{\eqref{aeq026}}{\meg}d_{j,t_j,N_j} \stackrel{\eqref{aeq028}}{\meg} \xi_{j,N'_{j-1}}-\xi_{j,N'_{j-1}}^2/18
      \stackrel{\eqref{aeq027}}{\meg} \xi_{j,L_1}-\xi_{j,L_1}^2/18.
    \end{equation}
    Finally observe that $\min L_1\meg\min N_0\meg m=\ell_{T_1}(t_1)$. The proof of Claim 1 is complete.
  \end{proof}

  \noindent \textbf{Claim 2:} For every $p\in L_1$ and every $L_1'\in \mathrm{Wd}(L_1)$ with $p\mik\min L_1'$
  there exist $L_1''\in \mathrm{Wd}(L_1')$ and $B_1,...,B_q$ subsets of
  $A_1\cap\mathrm{Succ}_{T_1}(t_1)\cap T_1(p),...,A_q\cap\mathrm{Succ}_{T_q}(t_q)\cap T_q(p)$
  respectively such that for every $j\in[q]$ we have the following
  \begin{enumerate}
    \item[(1)] $\dens_{T_j}(B_j|t_j,p)\meg\eta/2$ and
    \item[(2)] $d_{j,t,L_1''}\meg\eta/2$ for all $t\in B_j$.
  \end{enumerate}
  \begin{proof}
    [Proof of Claim 2]
    Let $p\in L_1$ and  $L_1'\in \mathrm{Wd}(L_1)$ with $p\mik\min L_1'$.
    We set $N_0=L_1'$.
    We inductively construct a decreasing sequence $(N_j)_{j=1}^q$ in $\mathrm{Wd}(N_0)$ and $B_1,...,B_q$
    subsets of $A_1\cap\mathrm{Succ}_{T_1}(t_1)\cap T_1(p),...,A_q\cap\mathrm{Succ}_{T_q}(t_q)\cap T_q(p)$
    respectively satisfying for every $j\in [q]$ the following:
    \begin{enumerate}
      \item[($\alpha$)] $\dens_{T_j}(B_j|t_j,p)\meg\eta/2$ and
      \item[($\beta$)] $d_{j,t,N_j}\meg\eta/2$ for all $t\in B_j$.
    \end{enumerate}
    Claim 2 follows easily by the above construction and inequality \eqref{aeq026}, setting $L_1''=N_q$.
    Let us assume that for some $j\in [q]$ we have chosen $(N_i)_{i=0}^{j-1}$ and if $j>1$, $(B_i)_{i=1}^{j-1}$
    satisfying ($\alpha$) and ($\beta$) above. We describe the construction of $B_j$ and $N_j$.

    We set $\Gamma=\mathrm{Succ}_{T_j}(t_j)\cap T_j(p)$. For every $n\in N_{j-1}$ we have the following.
    By the homogeneity of the tree $T_j$, we have that
    \begin{equation}
      \label{aeq030}
      \mathbb{E}_{t\in \Gamma}\dens_{T_j}(A|t,n)=\dens_{T_j}(A|t_j,n)\meg d_{j,t_j,L_1}\meg \xi_{j,L_1}-\xi_{j,L_1}^2/18.
    \end{equation}
    The first inequality holds, since $n$ belongs to $L_1$ too, while the second one holds by Claim 1.
    Then either
    \begin{enumerate}
      \item[(a)] there exists $t_n\in \Gamma$ such that $\dens_{T_j}(A|t_n,n)\meg\xi_{j,L_1}+\xi_{j,L_1}^2/18$, or
      \item[(b)] for every $t\in \Gamma$ we have that $\dens_{T_j}(A|t,n)<\xi_{j,L_1}+\xi_{j,L_1}^2/18$.
    \end{enumerate}

    By Fact \ref{Ramsey} there exists $N'_j\in\mathrm{Wd}(N_{j-1})$ such that
    either (a) occurs for every $n\in N'_j$
    or (b) occurs for every $n\in N'_j$. We will show that (b)
    occurs for every $n\in N'_j$ by showing
    that the first alternative leads to a contradiction. Indeed, assume that (a) occurs for every $n\in N'_j$.
    By Fact \ref{Ramsey} there exist $\widetilde{N}_j\in\mathrm{Wd}(N'_j)$
    and $t\in\Gamma$ such that $t_n=t$ for
    all $n\in \widetilde{N}_j$. Hence
    $d_{j,t,\widetilde{N}_j}\meg\xi_{j,L_1}+\xi_{j,L_1}^2/18>\xi_{j,L_1}
    \meg\xi_{j,\widetilde{N}_j}$ which is a contradiction by the
    definition of $\xi_{j,L_1}$ in \eqref{aeq025}.
    Hence for every $t\in \Gamma$ we have that
    \begin{equation}
      \label{aeq031}
      \dens_{T_j}(A|t,n)<\xi_{j,L_1}+\xi_{j,L_1}^2/18
    \end{equation}
    for all $n\in N'_j$.

    By Fact \ref{Markov} applied for ``$\delta=\xi_{j,L_1}-\xi_{j,L_1}^2$'' and ``$\ee=\xi_{j,L_1}/3$''
    and inequalities \eqref{aeq030} and \eqref{aeq031},
    for every $n\in N'_j$, setting
    \begin{equation}
      \label{aeq032}
      D_n=\{t\in\Gamma:\dens_{T_j}(A_j|t,n)\meg\xi_{j,L_1}-\xi_{j,L_1}^2/18-\xi_{j,L_1}/3\},
    \end{equation}
    we have that $|D_n|/|\Gamma|\meg1-\xi_{j,L_1}/3$.
    Applying Fact \ref{Ramsey}, we obtain $N_j\in\mathrm{Wd}(N'_j)$ and $D\subseteq \Gamma$ such
    that $D_n=D$ for all $n\in N_j$. Clearly, $\dens_{T_j}(D|t_j,p)=|D|/|\Gamma|\meg1-\xi_{j,L_1}/3$ and since
    $\dens_{T_j}(A_j|t_j,p)\meg d_{j,t_j,L_1}\meg \xi_{j,L_1}-\xi_{j,L_1}^2/18$,
    setting $B_j=A_j\cap D$, we have that
    \begin{equation}
      \label{aeq033}
      \dens_{T_j}(B_j|t_j,p)\meg\xi_{j,L_1}-\xi_{j,L_1}^2/18-\xi_{j,L_1}/3\meg\xi_{j,L_1}/2
      \meg\delta/2.
    \end{equation}
    Moreover, by \eqref{aeq032}, for every $n\in N_j$, we have that
    \begin{equation}
      \label{aeq034}
      \dens_{T_j}(A_j|t,n)\meg\xi_{j,L_1}-\xi_{j,L_1}^2/18-\xi_{j,L_1}/3\meg\xi_{j,L_1}/2
      \meg\eta/2.
    \end{equation}
    The proof of the inductive step and of the inductive construction is complete.
    As we have already noticed Claim 2 follows easily by the above construction and
    inequality \eqref{aeq026}, setting $L_1''=N_q$.
  \end{proof}
  The lemma follows by an iterated use of Claim 2 and an application of Lemma \ref{almost_done}.
  Indeed, let $P$ be an arithmetic progression of length $f_q(b,k,\eta/2)$ contained in $L_1$.
  Set $L_2=\{n\in L_1:n\meg\max P\}$. Clearly, $L_2$ is a van der Waerden set.
  Making use of Claim 2, we inductively construct a decreasing sequence $(L^p)_{p\in P}$ in
  $\mathrm{Wd}(L_2)$ and sequences $(B_1^p)_{p\in P},...,(B_q^p)_{p\in P}$ of subsets of
  $A_1\cap\mathrm{Succ}_{T_1}(t_1),...,A_q\cap\mathrm{Succ}_{T_q}(t_q)$ respectively
  satisfying  for every $j\in[q]$ and $p\in P$ we have the following
  \begin{enumerate}
    \item[(1)] $B_j^p\subseteq T_j(p)$,
    \item[(2)] $\dens_{T_j}(B_j^p|t_j,p)\meg\eta/2$ and
    \item[(3)] $d_{j,t,L^p}\meg\eta/2$ for all $t\in B^p_j$.
  \end{enumerate}
  For every $j\in [q]$ we set $B_j=\cup_{p\in P}B_j^p$.
  By (2) above, we have that $\dens_{T_j}(B_j|p)\meg\eta/2$ for all $j\in[q]$ and $p\in P$.
  By Lemma \ref{almost_done}, there exist an arithmetic progression $Q$ of length $k$ contained in $P$
  and strong subtrees $S_1,...,S_q$ of $T_1,...,T_q$ respectively, such that $S_j$ is contained in $B_j$
  and $L_{T_j}(S_j)=Q$ for all $j\in[q]$. Setting $L'=L^q$, the proof is complete.
\end{proof}

Before we proceed to the proof of Theorem \ref{Furst_Weiss_infinite} let us introduce some
additional notation. Let $T$ be a tree and $m$ be a positive integer with $m\mik h(T)$.
We set
\begin{equation}
  \label{aeq035}
  T|m=\bigcup_{n=0}^{m-1}T(n).
\end{equation}
Clearly $T|m$ is strong subtree of $T$.
\begin{proof}
  [Proof of Theorem \ref{Furst_Weiss_infinite}]
  Let $b$ the branching number of $T$, i.e the positive integer such that $T$ is a $b$-homogenous tree.
  Let $h_r=1+2+...+r$ for every positive integer $r$.
  We inductively construct an increasing sequence $(W_r)_{r=1}^\infty$ of finite strong subtrees of $T$,
  a sequence $(Q_r)_{r=1}^\infty$ of arithmetic progressions contained in $L$
  and a decreasing sequence $(L_r)_{r=1}^\infty$ in $\mathrm{Wd}(L)$ satisfying for every positive integer $r$
  the following.
  \begin{enumerate}
    \item[(a)] The arithmetic progression $Q_r$ is of length $r$.
    \item[(b)] $\max Q_r<\min Q_{r+1}$.
    \item[(c)] $h(W_r)=h_r+1$ and $L_T(W_r|h_r)=\cup_{j=1}^r Q_r$.
    \item[(d)] $W_r|h_r=W_{r+1}|h_r$.
    \item[(e)] $W_r$ is contained in $A$.
    \item[(f)] $\max L_T(W_r)\mik L_r$.
    \item[(e)] For every $t\in W_r$ and $n\in L_r$ we have that $\dens_{T}(A|t,n)\meg\delta/2^r$.
  \end{enumerate}
  By a straightforward application of Lemma \ref{inductive_step} for ``$q=1$'', ``$k=2$'', ``$\eta=\delta$'', ``$T_1=T$''
  and ``$A_1=A$'' we obtain $Q_1$, $W_1$ and $L_1$ as desired.
  Assume that for some positive integer $r$ the sequences $(W_j)_{j=1}^r$, $(Q_j)_{j=1}^r$
  and $(L_j)_{j=1}^r$ have been chosen satisfying (a)-(e) above. We describe the construction
  of $W_{r+1}$, $Q_{r+1}$ and $L_{r+1}$.

  Let $q=|W(h_r)|=b^{h_r+1}$ and $(t_j)_{j=1}^q$ be an enumeration of the set $W(h_r)$.
  For every $j\in [q]$ we set $T_j=\mathrm{Succ}_{T_j}(t_j)$ and $A_j=A\cap T_j$.
  Also let $L_{r+1}'=\{n-h_r:n\in L_r\}$. Applying Lemma \ref{inductive_step}
  for ``$k=r+2$'' and ``$\eta=\delta/2^r$'', we obtain an arithmetic progression $Q'_{r+1}$
  of length $r+2$ contained in $L_{r+1}'$, a van der Waerden subset $L_{r+1}''$ of $L_{r+1}'$
  with $\max Q'_{r+1}\mik \min L_{r+1}''$ and strong subtrees $S_1,...,S_q$ of $T_1,...,T_q$
  respectively, such that for every $j\in[q]$
  the following are satisfied:
  \begin{enumerate}
    \item[(i)]  $S_j$ is contained in $A_j$,
    \item[(ii)] $L_{T_j}(S_j)=Q_{r+1}'$ and
    \item[(iii)] $\dens_{T_j}(A_j|t,n)\meg\delta/2^{r+1}$ for all $t\in S_j$ and $n\in L_{r+1}''$.
  \end{enumerate}
    We set $L_{r+1}=\{n+h_t:n\in L_{r+1}''\}$, $Q_{r+1}=\{n+h_t:n\in Q_{r+1}'\setminus\{\max Q_{r+1}'\}\}$
  and $W_{r+1}=(W_r|h_r)\cup \cup_{j=1}^q S_j$. It follows readily that $W_{r+1}$, $Q_{r+1}$ and $L_{r+1}$
  are as desired and the proof of the inductive construction is complete. Finally, setting
  $S=\cup_{r=1}^\infty (W_r|h_r)$, it is immediate that $S$ is a strong subtree of $T$
  contained in $A$ having level set $L'=\cup_{r=1}^\infty Q_r$ which is a van der Waerden subset of $L$.
\end{proof}

\section{Correlation of measurable events on arithmetic progressions.}
\label{section_second_result_preliminary}
This section contains some preliminary work for the proof of Theorem \ref{main_result}.
We proceed to define some numerical invariants. For every $0<\eta\mik 1$ and every integer $k\meg2$ we set
\begin{equation} \label{beq02}
\theta_1(k,\eta)=(\eta/2)\cdot {\mathrm{Sz}(k,\eta/2) \choose 2}^{-1},
\end{equation}
while for every $0<\eta\mik1$ we set
\begin{equation} \label{new2}
\theta_1(1,\eta)=\eta.
\end{equation}
We will need the following lemma.
\begin{lem} \label{Szem_first_cor}
Let $0<\eta\mik1$ and $k$ be a positive integer. Then for every integer $n$ with $n\meg \mathrm{Sz}(k,\eta/2)$ and every family $(A_i)_{i=1}^n$
of measurable events in a probability space $(\Omega,\Sigma,\mu)$ such that $\mu(A_i)\meg\eta$ for all $i\in [n]$ there exists an arithmetic progression $P$ of length $k$ contained in $[n]$ such that
\begin{equation} \label{beq03}
\mu\Big(\bigcap_{i\in P}A_i\Big)\meg\theta_1(k,\eta).
\end{equation}
\end{lem}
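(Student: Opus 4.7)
The plan is to apply Szemer\'edi's Theorem to random ``slices'' of the index set $[n]$ and then extract a single arithmetic progression by pigeonholing. The case $k=1$ is immediate, since $P=\{1\}$ is a length-$1$ arithmetic progression and $\mu(A_1)\meg\eta=\theta_1(1,\eta)$. So I focus on $k\meg2$.

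Let $N=\mathrm{Sz}(k,\eta/2)$. Without loss of generality I may assume $n=N$: the events $A_1,\dots,A_N$ satisfy the same hypothesis, and any length-$k$ arithmetic progression in $[N]$ is also one in $[n]$. For $\omega\in\Omega$ define the random witness set $B(\omega)=\{i\in[N]:\omega\in A_i\}\subseteq[N]$. Fubini gives $\int_\Omega|B(\omega)|\,d\mu(\omega)=\sum_{i=1}^N\mu(A_i)\meg\eta N$. Since $|B(\omega)|/N\mik 1$ pointwise, splitting this integral over $E=\{\omega:|B(\omega)|\meg(\eta/2)N\}$ and its complement yields $\eta\mik \mu(E)+(\eta/2)$, whence $\mu(E)\meg\eta/2$.

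For every $\omega\in E$ the set $B(\omega)$ has density at least $\eta/2$ in $[N]$, so Szemer\'edi's Theorem (Theorem \ref{Szemeredi}), applicable because $N=\mathrm{Sz}(k,\eta/2)$, produces an arithmetic progression $P(\omega)\subseteq B(\omega)$ of length $k$. Since $k\meg2$, any such progression in $[N]$ is determined by its first two terms, so there are at most $\binom{N}{2}$ possible values for $P(\omega)$. A pigeonhole on the map $E\ni\omega\mapsto P(\omega)$ therefore produces one arithmetic progression $P$ of length $k$ in $[N]$ with
$$\mu\bigl(\{\omega\in E:P(\omega)=P\}\bigr)\meg\frac{\mu(E)}{\binom{N}{2}}\meg\frac{\eta/2}{\binom{N}{2}}=\theta_1(k,\eta).$$
Every such $\omega$ satisfies $P\subseteq B(\omega)$, i.e.\ $\omega\in\bigcap_{i\in P}A_i$, which gives the desired lower bound. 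The argument is essentially routine; there is no serious obstacle, since the constant $\theta_1(k,\eta)$ defined in \eqref{beq02} is precisely engineered as the product of the averaging loss $\eta/2$ and the crude combinatorial bound $\binom{N}{2}^{-1}$ on the number of length-$k$ arithmetic progressions in $[N]$.
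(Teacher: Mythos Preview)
Your proof is correct and follows essentially the same route as the paper: restrict to the first $\mathrm{Sz}(k,\eta/2)$ indices, use Fubini/Markov to find a set of $\omega$'s of measure at least $\eta/2$ whose witness set $B(\omega)$ has density at least $\eta/2$, apply Szemer\'edi pointwise, and pigeonhole over the at most $\binom{N}{2}$ length-$k$ progressions in $[N]$. The only cosmetic difference is that the paper packages the Fubini step via the product set $A\subseteq[n_0]\times\Omega$, whereas you integrate $|B(\omega)|$ directly; the content is identical.
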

\begin{proof}
For $k=1$ the result is immediate. Thus, let as assume that $k\meg2$. Fix $n\meg \mathrm{Sz}(k,\eta/2)$ and a family $(A_i)_{i=1}^n$ of measurable events in a probability space $(\Omega,\Sigma,\mu)$ satisfying $\mu(A_i)\meg\eta$ for all $i\in [n]$. We set $n_0=\mathrm{Sz}(k,\eta/2)$ and
\begin{equation} \label{beq04}
A=\{(i,x)\in[n_0]\times\Omega:x\in A_i\}.
\end{equation}
Clearly, the product probability measure $\dens_{[n_0]}\otimes\mu$ of $A$ is at least $\eta$. For every $x\in\Omega$ let
\begin{equation} \label{beq05}
A^x=\{i\in[n_0]:(i,x)\in A\}.
\end{equation}
By Fubini Theorem, setting
\begin{equation} \label{beq06}
C=\Big\{x\in\Omega:\dens_{[n_0]}(A^x)\meg\frac{\eta}{2}\Big\},
\end{equation}
we have that $C\in \Sigma$ and $\mu(C)\meg \frac{\eta}{2}$. By Theorem \ref{Szemeredi} and the choice of $n_0$, for every $x\in C$ there exists
an arithmetic progression $P_x$ of length $k$ contained in $A^x$. Observe that for every $x\in C$ we have
\begin{equation}\label{beq07}
x\in\bigcap_{i\in P_x}A_i.
\end{equation}
There are at most ${n_0\choose 2}$ many arithmetic progressions of length $k$ contained in $[n_0]$. Therefore, there exist an arithmetic
progression $P$ of length $k$ contained in $[n_0]$ and a measurable subset $C'$ of $C$ with
\begin{equation} \label{beq08}
\mu(C')\meg\mu(C)\cdot{\mathrm{Sz}(k,\eta/2)\choose2}^{-1}\stackrel{\eqref{beq02}}{\meg}\theta_1(k,\eta)
\end{equation}
and such that $P_x=P$ for all $x\in C'$. Invoking \eqref{beq07} we see that $C'\subseteq\cap_{i\in P}A_i$. Hence
$\mu(\cap_{i\in P}A_i)\meg\mu(C')\stackrel{\eqref{beq08}}{\meg}\theta_1(k,\eta)$ and the proof is completed.
\end{proof}
We will also need a variant of Lemma \ref{Szem_first_cor} which is stated in the more general context of uniformly density regular families.
To state it we need, first, to introduce some further invariants. Specifically, for every $0<\eta\mik1$ and every uniformly density regular
family $\ff$ we set
\begin{equation} \label{new}
M(\ff,\eta)=\max \Big\{ |\{F\in\ff:F\subseteq I\}|: I \text{ is an interval of length } B(\ff,\eta)\Big\}
\end{equation}
and we define
\begin{equation} \label{beq09}
\theta_2(\ff,\eta)=\frac{\eta}{4\cdot M(\ff,\eta/4)}.
\end{equation}
\begin{lem} \label{un_dens_reg_intersections_lem}
Let $0<\eta\mik1$ and $\ff$ be a uniformly density regular family. Also let $n$ be an integer with $n\meg(2/\ee)\cdot B(\ff,\ee/4)$ and $(\Omega,\Sigma,\mu)$ be a probability space. Finally let $A$ be a subset of $[n]\times\Omega$ with $(\dens_{[n]}\otimes\mu)(A)\meg\eta$.
Then there exists an element $F$ of $\ff$ such that, setting $\widetilde{A}=\{x\in\Omega:(i,x)\in A\text{ for all }i\in F\}$, we have
\begin{equation} \label{beq10}
\mu(\widetilde{A})\meg\theta_2(\ff,\eta).
\end{equation}
\end{lem}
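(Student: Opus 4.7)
The plan is to reduce to the single-interval case, where the hypothesis of uniform density regularity can be invoked, and then follow the pattern of Lemma \ref{Szem_first_cor}. Set $n_0 = B(\ff, \eta/4)$ and partition $[n]$ into consecutive blocks $I_1, \ldots, I_k$ of length $n_0$, where $k = \lfloor n/n_0 \rfloor$, plus a remainder interval of length less than $n_0$. For each $i \in [n]$ let $A_i = \{x \in \Omega : (i,x) \in A\}$, so that $\sum_{i=1}^n \mu(A_i) \meg \eta n$. Summing only over the indices in the full blocks and using the assumption on $n$ to absorb the remainder, one finds that at least one block $I_j$ satisfies
\[
(\dens_{I_j} \otimes \mu)\big(A \cap (I_j \times \Omega)\big) = \frac{1}{n_0}\sum_{i \in I_j}\mu(A_i) \meg \frac{\eta}{2}.
\]

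Once such a block $I_j$ is fixed, Fubini gives that the average of $\dens_{I_j}(A^x)$ over $x \in \Omega$ coincides with the density above, so a standard Markov-type estimate yields $\mu(C) \meg \eta/4$ for the set $C = \{x \in \Omega : \dens_{I_j}(A^x) \meg \eta/4\}$. For each $x \in C$, the set $A^x \cap I_j$ is a subset of $I_j$ of relative density at least $\eta/4$, and $|I_j| = B(\ff, \eta/4)$, so the defining property of a uniformly density regular family produces some $F_x \in \ff$ with $F_x \subseteq A^x \cap I_j$.

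A final pigeonhole step closes the argument: by the definition of $M(\ff, \eta/4)$, the number of elements of $\ff$ contained in $I_j$ is at most $M(\ff, \eta/4)$, so there exist a single $F \in \ff$ contained in $I_j$ and a measurable subset $C' \subseteq C$ with $\mu(C') \meg \mu(C)/M(\ff, \eta/4) \meg \theta_2(\ff, \eta)$ such that $F_x = F$ for every $x \in C'$. By construction $F \subseteq A^x$ for all $x \in C'$, hence $C' \subseteq \widetilde{A}$ and we conclude. The only delicate point is the very first step: the lower bound $n \meg (2/\eta)\,B(\ff, \eta/4)$ is exactly what is needed so that the contribution of the remainder interval of length less than $n_0$ does not destroy the $\eta/2$ density lower bound on some full block, and it is this halving, followed by the Markov step, that accounts for the factor $4$ appearing in the denominator of $\theta_2(\ff,\eta)$.
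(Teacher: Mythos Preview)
Your argument is correct and follows essentially the same route as the paper's own proof: partition $[n]$ into blocks of length $n_0=B(\ff,\eta/4)$, use the hypothesis $n\meg(2/\eta)n_0$ to find a block on which the density is at least $\eta/2$, apply Fubini/Markov to obtain the set $C$ with $\mu(C)\meg\eta/4$, invoke uniform density regularity on that block for each $x\in C$, and finish by pigeonholing over the at most $M(\ff,\eta/4)$ elements of $\ff$ contained in the block. The only cosmetic difference is that the paper phrases the first step via the bound $\dens_{[n]}([n]\setminus J)<\eta/2$ rather than your explicit summation, but the content is the same.
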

\begin{proof}
  We set $n_0=B(\ff,\eta/4)$. First we pick a subinterval $I$ of $[n]$ of length $n_0$ such that
  \begin{equation}
    \label{beq11}
    (\dens_{I}\otimes\mu)\big(A\cap(I\times\Omega)\big)\meg\eta/2
  \end{equation}
  as follows. We set $\ell=\big\lfloor n/ n_0\big\rfloor$ and we pick $I_1,...,I_\ell$ disjoint subintervals of $[n]$ each of length $n_0$. We set $J=\cup_{j=1}^{\ell}I_j$. By the assumptions on $n$, we have that $\dens_{[n]}([n]\setminus J)<\eta/2$. Consequently, since $I_1,...,I_\ell$ are of the same length, we have that
  \begin{equation}
    \label{beq12}
    \frac{\eta}{2}\mik(\dens_J\otimes\mu)\big(A\cap(J\times\Omega)\big)=\frac{1}{\ell}\sum_{j=1}^\ell(\dens_{I_j}\otimes\mu)\big(A\cap(I_j\times\Omega)\big).
  \end{equation}
  Hence for some $j_0\in[\ell]$ we have that $(\dens_{I_{j_0}}\otimes\mu)\big(A\cap(I_{j_0}\times\Omega)\big)\meg\eta/2$. Let $I=I_{j_0}$.

  For every $i\in I$ we set
  \begin{equation}
    \label{beq13}
    A_i=\{x\in\Omega:(i,x)\in A\}
  \end{equation}
  and for every $x\in\Omega$ we set
  \begin{equation}
    \label{beq14}
    A^x=\{i\in I: (i,x)\in A\}.
  \end{equation}
  By \eqref{beq11} and Fubini Theorem, we have that the set
  \begin{equation}
    \label{beq15}
    C=\Big\{x\in\Omega: \dens_{I}(A^x)\meg\eta/4\Big\}
  \end{equation}
  is a measurable event of probability at least $\eta/4$. Since $I$ is of length $B(\ff,\eta/4)$, for every $x\in C$ we have that there exists an element $F_x$ of $\ff$ contained in $A^x$. Let us observe that for every $x\in C$, by the definition of the set $A^x$, we have that
  \begin{equation}\label{beq16}
    x\in\bigcap_{i\in F_x}A_i.
  \end{equation}
  Since $I$ contains at most $M(\ff,\eta/4)$ elements of $\ff$, we have that there exist an element $F$ of $\ff$ contained in $I$ and a measurable subset $C'$ of $C$ such that
  \begin{equation}
    \label{beq17}
    \mu(C')\meg\frac{\mu(C)}{M(\ff,\eta/4)} \stackrel{\eqref{beq09}}{\meg}\theta_2(\ff,\eta)
  \end{equation}
  and $F_x=F$ for all $x\in C'$. Invoking \eqref{beq16} we have that $C'\subset\cap_{i\in F}A_i$. Setting $\widetilde{A}$ as in the statement, we clearly have that the intersection $\cap_{i\in F}A_i$ is subset of $\widetilde{A}$. Thus $\mu(\widetilde{A})\meg\mu(\cap_{i\in F}A_i)\meg\mu(C')\stackrel{\eqref{beq16}}{\meg}\theta_2(\ff,\eta)$ as desired.
\end{proof}

Before we proceed let us introduce some additional notation.
Let $(\Omega,\Sigma,\mu)$ be a probability space and $B$ be a measurable event of positive probability. For every $A\in \Sigma$ we set
\begin{equation}
  \label{beq18}
  \mu_B(A)=\frac{\mu(A\cap B)}{\mu(B)}.
\end{equation}
We have the following elementary fact.
\begin{fact}
  \label{fact_elementary}
  Let $0<\eta,\theta\mik1$. Also let $(\Omega,\Sigma,\mu)$ be a probability space and $A,B$ be two measurable events such that $\mu(A)\meg\eta$ and $\mu(B)\meg\theta$. If $\mu_B(A)\mik\eta/2$ then $\mu(\Omega\setminus B)\meg\eta/2$ and $\mu_{\Omega\setminus B}(A)\meg\eta+\eta\theta/2$.
\end{fact}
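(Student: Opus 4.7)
The proof is a short direct calculation, so my plan is to separate the two conclusions and derive each from the standing identity $\mu(A)=\mu(A\cap B)+\mu(A\cap(\Omega\setminus B))$ combined with the hypothesis $\mu(A\cap B)=\mu(B)\mu_B(A)\mik \mu(B)\eta/2$.

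For the first conclusion, the plan is to bound $\mu(A)$ crudely from above by writing
\[
\eta\mik\mu(A)\mik\mu(A\cap B)+\mu(\Omega\setminus B)\mik \mu(B)\cdot\eta/2+\mu(\Omega\setminus B)\mik \eta/2+\mu(\Omega\setminus B),
\]
which immediately yields $\mu(\Omega\setminus B)\meg\eta/2$. In particular $\mu(B)<1$, so the conditional probability $\mu_{\Omega\setminus B}(A)$ is well-defined.

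For the second conclusion, I would write
\[
\mu_{\Omega\setminus B}(A)=\frac{\mu(A)-\mu(A\cap B)}{1-\mu(B)}\meg\frac{\mu(A)-\mu(B)\eta/2}{1-\mu(B)}.
\]
Setting $a=\mu(A)\meg\eta$ and $b=\mu(B)\meg\theta$, the right-hand side is monotone in $a$, so it is $\meg(\eta-b\eta/2)/(1-b)=\eta(2-b)/(2(1-b))$. Thus the desired inequality $\mu_{\Omega\setminus B}(A)\meg\eta+\eta\theta/2=\eta(2+\theta)/2$ reduces to the elementary algebraic inequality $(2-b)/(1-b)\meg 2+\theta$, which in turn rearranges to $b(1+\theta)\meg\theta$. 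Since $b\meg\theta$ and $1+\theta\meg1$, this holds.

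There is no real obstacle here; the only slight subtlety is ensuring $\mu(B)<1$ before invoking the conditional probability in the second part, which is handled automatically by the first part (together with $\eta>0$). The rest is a one-line algebraic manipulation of the bound on the conditional probability.
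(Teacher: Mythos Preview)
Your proof is correct and follows essentially the same route as the paper: both split $\mu(A)$ into the $B$ and $\Omega\setminus B$ parts, bound $\mu(A\cap B)\mik(\eta/2)\mu(B)$, and then solve the resulting inequality for $\mu_{\Omega\setminus B}(A)$. The only cosmetic difference is in the final algebra---the paper writes $\eta\frac{1-\mu(B)/2}{1-\mu(B)}=\eta\big(1+\frac{\mu(B)/2}{1-\mu(B)}\big)\meg\eta+\eta\mu(B)/2\meg\eta+\eta\theta/2$, while you reduce to $b(1+\theta)\meg\theta$; these are equivalent one-line checks.
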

\begin{proof}
  Assuming that $\mu_B(A)\mik\eta/2$ we have the following. First we observe that
  \begin{equation}
    \label{beq19}
      \mu(\Omega\setminus B) \meg\mu(A\setminus B) =\mu(A)-\mu(A\cap B) \meg \mu(A)-\mu_B(A)\meg\eta/2.
  \end{equation}
  Since
  \begin{equation}
    \label{beq20}
    \begin{split}
      \eta&\mik\mu(A)=\mu(\Omega\setminus B)\cdot\mu_{\Omega\setminus B}(A)+\mu(B)\cdot\mu_B(A)\\
      &\mik\big(1-\mu(B)\big)\cdot\mu_{\Omega\setminus B}(A)+(\eta/2)\cdot\mu(B),
    \end{split}
  \end{equation}
  we have that
  \begin{equation}
    \label{beq21}
    \begin{split}
      \mu_{\Omega\setminus B}(A) &\meg\eta\cdot\frac{1-\mu(B)/2}{1-\mu(B)} =\eta\cdot\Big(1+\frac{\mu(B)/2}{1-\mu(B)}\Big)\\
      &\meg \eta+\eta\cdot\mu(B)/2\meg \eta+\eta\theta/2
    \end{split}
  \end{equation}
  as desired.
\end{proof}

\begin{lem}
  \label{main_measure_theoritic_lem}
  Let $0<\eta\leq1$ and $k$ be a positive integer. Also let $L$ be a van der Waerden set and $(A_\ell)_{\ell\in L}$ be a family of measurable events in a probability space $(\Omega,\Sigma,\mu)$ such that $\mu(A_\ell)\meg\eta$ for all $\ell\in L$. Then there exist an arithmetic progression $P$ of length $k$ contained in $L$, a van der Waerden subset $L'$ of $L$ and a subset $B$ of $\cap_{i\in P}A_i$ such that
  \begin{equation}
    \label{beq22}
     \mu(B)\meg\Big(\frac{\eta}{2}\Big)^{\big{\lfloor}\frac{2}{\eta\cdot\theta_1(k,\eta)}\big{\rfloor}-1}\theta_1(k,\eta)
  \end{equation}
  and $\mu_B(A_\ell)\meg\eta/2$,  for all $\ell\in L'$. Moreover, the set $B$ belongs to the algebra generated by the family $\{A_\ell:\ell\in L\;\text{and}\;\ell\mik\max R\}$.
\end{lem}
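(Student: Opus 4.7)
The plan is to carry out an energy-increment iteration. At each stage $j\meg0$ we will maintain a measurable ambient set $E_j\subseteq\Omega$, a van der Waerden set $L_j\subseteq L$, and a density level $\eta_j$, starting from $E_0=\Omega$, $L_0=L$, $\eta_0=\eta$, subject to the running invariants $\mu_{E_j}(A_\ell)\meg\eta_j$ for all $\ell\in L_j$, $\mu(E_j)\meg(\eta/2)^j$, and $\eta_j\meg\eta+j\cdot\eta\theta_1(k,\eta)/2$. Alongside these, the iteration will record arithmetic progressions $P_0,P_1,\dots$ of length $k$ in $L$ with strictly increasing maxima, and the output $B$ will be the ``candidate'' produced at the terminal stage.

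The stage-$j$ step runs as follows. Since $L_j$ is van der Waerden, we select inside it an arithmetic progression $P_j'$ of length $\mathrm{Sz}(k,\eta_j/2)$; then we apply Lemma~\ref{Szem_first_cor} in the probability space $(E_j,\mu_{E_j})$ to the family $(A_\ell)_{\ell\in P_j'}$ to extract a length-$k$ subprogression $P_j\subseteq P_j'$ with $\mu_{E_j}(\bigcap_{i\in P_j}A_i)\meg\theta_1(k,\eta_j)\meg\theta_1(k,\eta)$, and we put $B_j:=E_j\cap\bigcap_{i\in P_j}A_i$. Next we 2-color $L_j$ by whether $\mu_{B_j}(A_\ell)\meg\eta/2$ and apply Fact~\ref{Ramsey}: if the ``$\meg$''-class is van der Waerden, we stop and output $P:=P_j$, $B:=B_j$, $L':=$ that class; otherwise the opposite class $L_j^-$ is van der Waerden, and Fact~\ref{fact_elementary} in $(E_j,\mu_{E_j})$ with ``$\eta$''$=\eta_j$ and ``$\theta$''$=\theta_1(k,\eta_j)$ (applicable because $\mu_{B_j}(A_\ell)<\eta/2\mik\eta_j/2$) yields simultaneously $\mu_{E_j}(E_j\setminus B_j)\meg\eta/2$ and $\mu_{E_j\setminus B_j}(A_\ell)\meg\eta_j+\eta\theta_1(k,\eta)/2$ for every $\ell\in L_j^-$. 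We then update $E_{j+1}:=E_j\setminus B_j$, $L_{j+1}:=L_j^-\cap(\max P_j,+\infty)$ (still van der Waerden, since removing finitely many elements preserves that property), and $\eta_{j+1}:=\eta_j+\eta\theta_1(k,\eta)/2$.

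The cap $\eta_j\mik1$ combined with the additive lower bound on $\eta_j$ forces termination at some $j_*\mik\lfloor 2/(\eta\theta_1(k,\eta))\rfloor -1$. At the terminal stage we obtain
\begin{equation*}
\mu(B)=\mu(E_{j_*})\cdot\mu_{E_{j_*}}(B_{j_*})\meg(\eta/2)^{j_*}\theta_1(k,\eta)\meg(\eta/2)^{\lfloor 2/(\eta\theta_1(k,\eta))\rfloor -1}\theta_1(k,\eta),
\end{equation*}
which is the required lower bound. The ``moreover'' clause follows because the bookkeeping $L_j\subseteq(\max P_{j-1},+\infty)$ forces $\max P_0<\cdots<\max P_{j_*}=:\max P$, while unrolling gives $B=(\Omega\setminus\bigcup_{r<j_*}B_r)\cap\bigcap_{i\in P_{j_*}}A_i$ as a Boolean combination of sets $A_\ell$ with $\ell\in P_0\cup\cdots\cup P_{j_*}\subseteq L\cap[1,\max P]$. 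The conceptual crux I expect to have to get right is running the iteration on the shrinking ambient set $E_j$ rather than refining the candidate $B$ itself: this lets the energy supplied by Fact~\ref{fact_elementary} accumulate outside the current candidate, so that the $\eta_j\mik1$ bound genuinely caps the number of iterations while each surviving $B_j$ still inherits mass $\meg\theta_1(k,\eta)$ relative to the current $E_j$.
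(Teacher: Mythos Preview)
Your proposal is correct and follows essentially the same energy-increment iteration as the paper: your ambient sets $E_j$ are the paper's $\Omega_j$, your candidates $B_j$ and the dichotomy via Fact~\ref{Ramsey} match the paper's step, and the termination bound and final measure estimate coincide after the index shift. Your extra bookkeeping $L_{j+1}\subseteq(\max P_j,+\infty)$ is a harmless refinement that makes the ``moreover'' clause cleaner than in the paper's version.
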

\begin{proof}
  We set $L_0=L$ and $\Omega_0=\Omega$. We pick a positive integer $s_0$ with $s_0\mik \big{\lfloor}\frac{2}{\eta\cdot\theta_1(k,\eta)}\big{\rfloor}$ and we construct by induction $L_1,...,L_{s_0}$ and $P_1,...,P_{s_0}$ such that setting inductively $B_t=(\cap_{i\in P_{t}}A_i)\cap\Omega_{t-1}$ and $\Omega_t=\Omega_{t-1}\setminus B_t$ for all $t=1,...,s_0$, we have that the following are satisfied.
  \begin{enumerate}
     \item[(i)] For every $t=1,...,s_0$ we have that $L_t$ is a van der Waerden subset of $L_{t-1}$.
     \item[(ii)] For every $t=1,...,s_0$ we have that $P_t$ is an arithmetic progression of length $k$ contained in $L_{t-1}$.
    \item[(iii)] For every $t=0,...,s_0-1$ we have $\mu(\Omega_t)\meg(\eta/2)^t$.
    \item[(iv)] For every $t=1,...,s_0$ we have $\mu_{\Omega_{t-1}}(B_t)\meg\theta_1(k,\eta)$.
    \item[(v)] For every $t=0,...,s_0-1$ we have $\mu_{\Omega_t}(A_\ell)\meg\eta+t\cdot\eta\cdot\theta_1(k,\eta)/2$ for all $\ell\in L_t$.
      \item[(vi)] For every $t=1,...,s_0-1$ we have that $\mu_{B_t}(A_\ell)<\eta/2$ for all $\ell\in L_t$.
      \item[(vii)]  $\mu_{B_{s_0}}(A_\ell)\meg\eta/2$ for all $\ell\in L_{s_0}$.
  \end{enumerate}
  Assume that for some $s<\big{\lfloor}\frac{2}{\eta\cdot\theta_1(k,\eta)}\big{\rfloor}$ we have constructed $(L_{t})_{t=0}^s$ and if $s\meg1$, $(P_{t})_{t=1}^s$, satisfying (i)-(vi) above. Let $(\Omega_t)_{t=0}^s$ be as defined above. By the inductive assumption (i), we have that $L_s$ is a van der Waerden set and therefore we may pick an arithmetic progression $P$ of length $\mathrm{Sz}(k,\eta/2)$ contained in $L_s$. By the inductive assumption (v) and Lemma \ref{Szem_first_cor} there exists an arithmetic progression $P_{s+1}$ of length $k$ contained in $P$ such that
  \begin{equation}
    \label{beq23}
    \mu_{\Omega_{s}}(B_{s+1})\meg\theta_1(k,\eta),
  \end{equation}
  where $B_{s+1}=(\cap_{i\in P_{s+1}}A_i)\cap \Omega_s$. By Fact \ref{Ramsey} we pass to a van der Waerden subset $L_{s+1}$ of $L_s$ such that either
  \begin{enumerate}
    \item[(a)] $\mu_{B_{s+1}}(A_\ell)\meg\eta/2$, for all $\ell\in L_{s+1}$, or
    \item[(b)] $\mu_{B_{s+1}}(A_\ell)<\eta/2$, for all $\ell\in L_{s+1}$.
  \end{enumerate}
  If (a) occurs, then we set $s_0=s+1$ and the inductive construction is complete. Let us assume that (b) holds. Then invoking \eqref{beq23} and (v) of the inductive assumptions, by Fact \ref{fact_elementary}, we have that
  \begin{equation}
    \label{beq24}
    \mu_{\Omega_{s+1}}(A_\ell)\meg\eta+(s+1)\cdot\eta\cdot\theta_1(k,\eta)/2
  \end{equation}
  for all $\ell\in L_{s+1}$, where $\Omega_{s+1}=\Omega_s\setminus B_{s+1}$. Moreover, by Fact \ref{fact_elementary} and the inductive assumptions (iii) and (v) we have that $\mu(\Omega_{s+1})\meg(\eta/2)\cdot\mu(\Omega_s)\meg(\eta/2)^{s+1}$. The inductive step of the construction is complete. Finally, let as point out that if $s=\big{\lfloor}\frac{2}{\eta\cdot\theta_1(k,\eta)}\big{\rfloor}-1$, then (a) has to occur. Indeed, assuming that (b) occurs then by \eqref{beq24} we would have that the relative probability of $A_\ell$ inside $\Omega_{s+1}$ exceeds $1$.

  Hence, setting $L'=L_{s_0}$, $P=P_{s_0}$ and $B=B_{s_0}$, we have that $L'$ is a van der Waerden subset of $L$ and $P$ is an arithmetic progression of length $k$ contained in $L$. Moreover, we have that  $B=(\cap_{i\in P}A_i)\cap\Omega_{s_0-1}\subseteq\cap_{i\in P}A_i$ and
  \begin{equation}
    \label{beq25}
    \begin{split}
      \mu(B)=&\mu(B_{s_0})= \mu_{\Omega_{s_0-1}}(B_{s_0})\cdot\mu(\Omega_{s_0-1}) \stackrel{\text{(iv),(iii)}}{\meg}\theta_1(k,\eta)\cdot(\eta/2)^{s_0-1}\\
      \meg& \theta_1(k,\eta)\cdot
    (\eta/2)^{\big{\lfloor}\frac{2}{\eta\cdot\theta_1(k,\eta)}\big{\rfloor}-1}.
    \end{split}
  \end{equation}
  By (vii), we have that $\mu_B(A_\ell)\meg\eta/2$, for all $\ell\in L'$. Finally, it is immediate that the set $B$, by its definition, belongs to the algebra generated by the family $\{A_\ell:\ell\in L\;\text{and}\;\ell\mik\max R\}$ as desired.
\end{proof}

\section{The auxiliary map $T$} \label{section_map_T}
As we have already mentioned the definition of the map $V_\delta$ makes use of an auxiliary map $T$. Recall that by $\rrr$ we denote the set of all uniformly regular families (see Definition \ref{un_dens_reg_defn}). We define the map $T:\rrr^{<\omega}\times \rr_+\to\nn$, where by $\rr_+$ we denote the set of all positive reals, as follows. Let $q$ be a non-negative integer and $((\ff_p)_{p=0}^q,\ee)$ be an element of $\rrr^{<\omega}\times\rr_+$. We inductively define $(\ee_p)_{p=0}^q$ by setting
\begin{equation}
  \label{beq26}
  \ee_0=\ee\text{ and } \ee_{p+1}=\theta_2(\ff_{p},\ee_p)
\end{equation}
for all $p=0,...,q-1$. Finally we set
\begin{equation}
  \label{beq27}
  T\big((\ff_p)_{p=0}^q,\ee\big)=\Big{\lceil}\frac{2}{\ee_q}\cdot B(\ff_q,\ee_q/4)\Big{\rceil}.
\end{equation}
We then extend $T$ on $\rrr^{<\omega}\times\rr_+$ arbitrarily.
Let us observe, for later use, that if $q$ is positive then
\begin{equation}
  \label{beq28}
  T\big((\ff_p)_{p=0}^q,\ee\big)= T\big((\ff_p)_{p=1}^q,\theta_2(\ff_0,\ee)\big).
\end{equation}
Although the following notation is quite standard in the literature, we include it below for clarity.
\begin{notation}
  Let $q_0<q_1< q_2$ be non-negative integers and $(n_q)_q$ be a sequence of positive integers. Also let $x\in\prod_{p=q_0}^{q_1-1}[n_p]$ and $y\in\prod_{p=q_1}^{q_2-1}[n_p]$. By $x^\con y$ we denote the concatenation of the sequences $x,y$, i.e. the sequence $z\in\prod_{p=q_0}^{q_2-1}[n_p]$ satisfying $z(p)=x(p)$ for all $p=q_0,\ldots,q_1-1$ and $z(p)=y(p)$ for all $p=q_1,\ldots,q_2-1$. Moreover, for
  $A\subseteq\prod_{p=q_0}^{q_1-1}[n_p]$ and $B\subseteq\prod_{p=q_1}^{q_2-1}[n_p]$ we set
  \begin{equation}
    \label{beq29}
    x^\con B=\{x^\con y:y\in B\}
  \end{equation}
  and
  \begin{equation}
    \label{beq30}
    A^\con B=\bigcup_{x\in A}x^\con B.
  \end{equation}
\end{notation}
The main property of the map $T$ that we are interested in is described by the following lemma. Similar results to this one have already been considered (see \cite{E,ES,GRS}).
\begin{lem}
  \label{T_main_property_lem}
  Let $0<\ee\mik1$ and $q$ be a non-negative integer. Also set $\ff_0,...,\ff_q$ be uniformly regular families and $n_0,...,n_q$ be integers such that $n_p\meg T\big((\ff_s)_{s=0}^p,\ee\big)$ for all $p=0,...,q$. Finally, let $D$ be a subset of $\prod_{p=0}^q[n_p]$ of density at least $\ee$. Then there exists a sequence $(I_p)_{p=0}^q$ such that
  \begin{enumerate}
    \item[(i)] $I_p$ is an element of $\ff_p$ contained in $[n_p]$ for all $p=0,...,q$ and
    \item[(ii)] $\prod_{p=0}^q I_p$ is subset of $D$.
  \end{enumerate}
\end{lem}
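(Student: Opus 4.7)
The plan is to prove Lemma \ref{T_main_property_lem} by induction on $q$, peeling off the first coordinate of the product at each step and using Lemma \ref{un_dens_reg_intersections_lem} as the main combinatorial engine. The definition of $T$ and in particular the recursion \eqref{beq28} are tailored precisely so that the density parameter $\ee$ deteriorates to $\ee_1=\theta_2(\ff_0,\ee)$ after one step, which is exactly the loss produced by Lemma \ref{un_dens_reg_intersections_lem}.

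For the base case $q=0$, the set $D\subseteq[n_0]$ has density at least $\ee$ and $n_0\meg T\big((\ff_0),\ee\big)=\big{\lceil}(2/\ee)\cdot B(\ff_0,\ee/4)\big{\rceil}\meg B(\ff_0,\ee)$, so by Definition \ref{un_dens_reg_defn} the set $D$ already contains an element of $\ff_0$, which we take as $I_0$.

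For the inductive step, suppose the lemma holds for $q-1$ and view $D$ as a subset of $[n_0]\times\Omega$, where $\Omega=\prod_{p=1}^{q}[n_p]$ is equipped with the uniform probability measure. Since $n_0\meg T\big((\ff_s)_{s=0}^q,\ee\big)\meg T\big((\ff_0),\ee\big)$, the quantitative hypothesis of Lemma \ref{un_dens_reg_intersections_lem} is met with ``$\eta=\ee$''. Applying that lemma produces an element $I_0\in\ff_0$ contained in $[n_0]$ together with a set
\[
\widetilde{D}=\{y\in\Omega:(i,y)\in D\text{ for all }i\in I_0\}\subseteq\prod_{p=1}^{q}[n_p]
\]
of density at least $\theta_2(\ff_0,\ee)=\ee_1$. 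Using identity \eqref{beq28}, the hypothesis $n_p\meg T\big((\ff_s)_{s=0}^p,\ee\big)$ rewrites as $n_p\meg T\big((\ff_s)_{s=1}^p,\ee_1\big)$ for every $p=1,\dots,q$, so $\widetilde{D}$ satisfies the hypotheses of the lemma with parameters $\big((\ff_p)_{p=1}^q,\ee_1\big)$. The inductive hypothesis then yields a sequence $(I_p)_{p=1}^q$ with $I_p\in\ff_p$, $I_p\subseteq[n_p]$, and $\prod_{p=1}^q I_p\subseteq \widetilde{D}$. Combining with the defining property of $\widetilde{D}$ gives $\prod_{p=0}^q I_p\subseteq D$, completing the induction.

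The conceptual core is already in Lemma \ref{un_dens_reg_intersections_lem}; the only real work here is the bookkeeping that matches the recursion defining $T$ with the density loss at each peeling step, and the relation \eqref{beq28} does exactly this for us. I do not expect any genuine obstacle, only the need to invoke \eqref{beq28} cleanly so that the inductive hypothesis applies to $(\ff_p)_{p=1}^q$ with parameter $\ee_1$ rather than $\ee$.
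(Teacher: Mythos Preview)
Your proof is essentially identical to the paper's: induction on $q$, peel off the first coordinate via Lemma \ref{un_dens_reg_intersections_lem}, then invoke \eqref{beq28} to feed the remaining coordinates to the inductive hypothesis with parameter $\ee_1=\theta_2(\ff_0,\ee)$. The only slip is the chain ``$n_0\meg T\big((\ff_s)_{s=0}^q,\ee\big)\meg T\big((\ff_0),\ee\big)$'': the hypothesis gives $n_0\meg T\big((\ff_0),\ee\big)$ directly (case $p=0$), not $n_0\meg T\big((\ff_s)_{s=0}^q,\ee\big)$, so just drop the middle term.
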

\begin{proof}
  We proceed by induction on $q$. First let us observe that for $q=0$ we have that $T\big((\ff_0),\ee\big)\meg B(\ff_0,\ee)$ and therefore the result follows immediately by the definition the number $B(\ff_0,\ee)$.

  Assume that the statement holds for some $q$. Fix a real $\ee$ with $0<\ee\mik1$,  uniformly density regular families $\ff_0,...,\ff_{q+1}$ and integers $n_0,...,n_{q+1}$  satisfying $n_p\meg T\big((\ff_s)_{s=0}^p,\ee\big)$ for all $p=0,...,q+1$. Finally, let $D$ be a subset of $\prod_{p=0}^{q+1}[n_p]$ of density at least $\ee$. We set $\Omega=\prod_{p=1}^{q+1}[n_p]$. Observe that $\prod_{p=0}^{q+1}[n_p]=[n_0]\times\Omega$ and  that the probability measures $\dens_{\prod_{p=0}^{q+1}[n_p]}$ and $\dens_{n_0}\otimes\dens_\Omega$ are equal. Thus $(\dens_{n_0}\otimes\dens_\Omega)(D)\meg\ee$. Since $n_0\meg T\big((\ff_0),\ee\big)\stackrel{\eqref{beq27}}{=}(2/\ee)\cdot B(\ff_0,\ee/4)$, by Lemma \ref{un_dens_reg_intersections_lem}, there exists an element $I_0$ of $\ff_0$ such that setting $\widetilde{D}=\{x\in\Omega:(i,x)\in D\text{ for all }i\in I_0\}$, we have that
  \begin{equation}
    \label{beq31}
    \mu(\widetilde{D})\meg\theta_2(\ff_0,\ee).
  \end{equation}
  By the definition of $\widetilde{D}$, it is immediate that
  \begin{equation}
    \label{beq32}
    I_0^\con\widetilde{D}\subseteq D.
  \end{equation}
   Also notice that for every $p=1,...,q+1$,
  \begin{equation}
    \label{beq33}
    n_{p}\meg T\big((\ff_s)_{s=0}^{p},\ee\big)\stackrel{\eqref{beq28}}{=} T\big((\ff_s)_{s=1}^{p},\theta_2(\ff_0,\ee)\big).
  \end{equation}
  By \eqref{beq31}, \eqref{beq33} and the inductive assumption we have that there exists a sequence $(I_p)_{p=1}^{q+1}$ such that
  \begin{enumerate}
    \item[(a)] $I_p$ is an element of $\ff_p$ contained in $[n_p]$ for all $p=1,...,q+1$ and
    \item[(b)] $\prod_{p=1}^{q+1}I_p$ is subset of $\widetilde{D}$.
  \end{enumerate}
  By (b) and \eqref{beq32}, we have that $\prod_{p=0}^{q+1}I_p\subset I_0^\con\widetilde{D}\subseteq D$ and the proof is complete.
\end{proof}
\begin{defn}
  Let $0<\ee\mik1$ and $r$ be a non-negative integer. Also let $L$ be a van der Waerden subset of $\nn_+$ and $(n_q)_q$ be a sequence of positive integers. We will say that a sequence $(D_\ell)_{\ell\in L}$ is $(r,\ee,(n_q)_q)$-dense if for every $\ell\in L$ with $\ell>r$ we have that $D_\ell$ is a subset of $\prod_{p=r}^{\ell-1}[n_p]$ of density at least $\ee$.
\end{defn}

For every $0<\ee\mik 1$ and every positive integer $k$ we define
\begin{equation}
  \label{beq34}
  \theta_3(k,\ee)=\frac{1}{2}\cdot\Big(\frac{\ee}{2}\Big)^{\big\lfloor\frac{2}{\ee\cdot\theta_1(k,\ee)}\big\rfloor}\cdot\theta_1(k,\ee).
\end{equation}
Finally, we recall that for every non-negative integer $n$ and every sequence $x$ of length at least $n$ (finite or infinite), by $R_n(x)$ we denote the initial segment of $x$ of length $n$.

\begin{lem}
  \label{inductive_step}
  Let  $0<\ee\mik1$, $r$ be a non-negative integer and $k$ be a positive integer. Also let $(\ff_q)_q$ be a sequence of uniformly density regular families and $(n_q)_q$ be a sequence of positive integers such that $n_q\meg T\big((\ff_p)_{p=r}^q,\theta_3(k,\ee)\big)$, for all $q\meg r$. Finally, let $L$ be a van der Waerden subset of $\nn_+$ and $(D_\ell)_{\ell\in L}$ be $(r,\ee,(n_q)_q)$-dense. Then there exist an arithmetic progression $P$ of length $k$ inside $L$, a van der Waerden subset $L'$ of $L$, a finite sequence $(I_p)_{p=r}^{r'-1}$ and an $(r',\ee',(n_q)_q)$-dense sequence $(\widetilde{D}_\ell)_{\ell\in L'}$, where $r'=\max P$ and $\ee'=\ee\cdot2^{-(\prod_{p=r}^{r'-1}n_p+2)}$, satisfying the following.
  \begin{enumerate}
    \item[(i)] For every $p=r,...,r'-1$, the set $I_p$ is an element of $\ff_p$ contained in $[n_p]$.
    \item[(ii)] $r<\min P$.
    \item[(iii)] For every $q\in P$, the set $\prod_{p=r}^{q-1}I_p$ is a subset of $D_q$.
    \item[(iv)] For every $\ell\in L'$, the set $(\prod_{p=r}^{r'-1}I_p)^\con\widetilde{D}_\ell$ is a subset of $D_\ell$.
  \end{enumerate}
\end{lem}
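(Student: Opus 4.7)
The plan is to combine Lemma \ref{main_measure_theoritic_lem} (which will supply the arithmetic progression $P$ together with an auxiliary high-measure set $B$) and Lemma \ref{T_main_property_lem} (which will extract the $I_p$'s), bridged by a Markov/pigeonhole step and an application of Fact \ref{Ramsey} that renders a common slice type constant along a van der Waerden subset.

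First I would turn the $D_\ell$'s into measurable events. Passing to the van der Waerden subset $L_0=\{\ell\in L:\ell>r\}$, I work on $\Omega=\prod_{p\meg r}[n_p]$ with the uniform product measure $\mu$, and for $\ell\in L_0$ I introduce the cylinder $A_\ell=\{y\in\Omega:y|_{[r,\ell-1]}\in D_\ell\}$, so that $\mu(A_\ell)=\dens(D_\ell)\meg\ee$. Applying Lemma \ref{main_measure_theoritic_lem} with $\eta=\ee$ to $(A_\ell)_{\ell\in L_0}$ yields an arithmetic progression $P\subseteq L_0$ of length $k$ (hence $r<\min P$), a van der Waerden subset $L^*$ of $L_0$, and a set $B\subseteq\bigcap_{q\in P}A_q$ with $\mu_B(A_\ell)\meg\ee/2$ for every $\ell\in L^*$. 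Crucially $B$ lies in the algebra generated by $\{A_\ell:\ell\mik r'\}$ where $r'=\max P$; since each such $A_\ell$ depends only on the coordinates $r,\ldots,r'-1$, there is $B_0\subseteq\prod_{p=r}^{r'-1}[n_p]$ with $B=B_0\times\prod_{p\meg r'}[n_p]$, and comparing the measure bound of Lemma \ref{main_measure_theoritic_lem} to the definition of $\theta_3$ gives $\dens(B_0)=\mu(B)\meg(4/\ee)\,\theta_3(k,\ee)$.

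Next I would isolate a constant slice type. For $\ell\in L^*$ with $\ell>r'$ and $z\in\prod_{p=r'}^{\ell-1}[n_p]$, set $D_\ell^z=\{x\in\prod_{p=r}^{r'-1}[n_p]:(x,z)\in D_\ell\}$. The bound $\mu_B(A_\ell)\meg\ee/2$ rewrites as $\mathbb{E}_z\,|D_\ell^z\cap B_0|/|B_0|\meg\ee/2$, so Markov supplies at least an $\ee/4$-fraction of $z$'s with $|D_\ell^z\cap B_0|/|B_0|\meg\ee/4$. Writing $K=\prod_{p=r}^{r'-1}n_p$, the intersection $D_\ell^z\cap B_0$ takes at most $2^{|B_0|}\mik 2^K$ values; pigeonholing over these produces $V^*_\ell\subseteq B_0$ with $|V^*_\ell|/|B_0|\meg\ee/4$ and $\Pr_z[D_\ell^z\cap B_0=V^*_\ell]\meg(\ee/4)\cdot 2^{-K}$. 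As $V^*_\ell$ ranges over a finite set, Fact \ref{Ramsey} delivers a van der Waerden subset $L'\subseteq\{\ell\in L^*:\ell>r'\}$ on which $V^*_\ell$ is a constant set $V^*\subseteq B_0$.

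Finally I would extract the $I_p$'s and $\widetilde{D}_\ell$. The density of $V^*$ inside $\prod_{p=r}^{r'-1}[n_p]$ is $(|V^*|/|B_0|)\cdot\dens(B_0)\meg(\ee/4)\cdot(4/\ee)\,\theta_3(k,\ee)=\theta_3(k,\ee)$, so the lower bound $n_p\meg T((\ff_s)_{s=r}^p,\theta_3(k,\ee))$ lets Lemma \ref{T_main_property_lem} supply $I_p\in\ff_p$ with $I_p\subseteq[n_p]$ and $\prod_{p=r}^{r'-1}I_p\subseteq V^*\subseteq B_0$. Define $\widetilde{D}_\ell=\{z:D_\ell^z\cap B_0=V^*\}$; the pigeonhole step gives $\dens(\widetilde{D}_\ell)\meg(\ee/4)\cdot 2^{-K}=\ee\cdot 2^{-(K+2)}=\ee'$, and for $z\in\widetilde{D}_\ell$ and $x\in\prod_{p=r}^{r'-1}I_p\subseteq V^*=D_\ell^z\cap B_0$ we have $(x,z)\in D_\ell$, establishing (iv). For (iii), the inclusion $B\subseteq A_q$ for $q\in P$ forces $B_0\subseteq\{x:x|_{[r,q-1]}\in D_q\}$, so restricting to $\prod_{p=r}^{r'-1}I_p\subseteq B_0$ yields $\prod_{p=r}^{q-1}I_p\subseteq D_q$. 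The main technical subtlety is the calibration between the measure loss from Lemma \ref{main_measure_theoritic_lem} and the $\ee/4$ loss from the Markov/pigeonhole step, which is precisely what the definition of $\theta_3(k,\ee)$ is tuned to absorb.
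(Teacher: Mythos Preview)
Your proof is correct and follows essentially the same route as the paper's: apply Lemma~\ref{main_measure_theoritic_lem} to obtain $P$, $B_0$ and the conditional bound $\mu_{B_0}(A_\ell)\meg\ee/2$, then use a Fubini/Markov step and a pigeonhole over subsets of $B_0$ to find a single ``slice type'' $V^*_\ell$, stabilize it along a van der Waerden subset via Fact~\ref{Ramsey}, and feed the resulting $\theta_3(k,\ee)$-dense set into Lemma~\ref{T_main_property_lem}. The only cosmetic difference is that you track the bound $\dens(B_0)\meg(4/\ee)\,\theta_3(k,\ee)$ explicitly, whereas the paper absorbs this directly into the single inequality $\dens(\Gamma)\meg\theta_3(k,\ee)$.
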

\begin{proof}
  Passing to a final segment of $L$, if it is necessary, we may assume that $r<\min L$. Let $\Omega=\prod_{p=r}^\infty[n_p]$ and $\mu$ be the Lebesgue (probability) measure on $\Omega$. Also let for every $\ell\in L$, $A_\ell=\{x\in\Omega:R_{\ell-r}(x)\in D_\ell\}$. By Lemma \ref{main_measure_theoritic_lem}, applied for ``$\eta=\ee$'', there exist an arithmetic progression $P$ of length $k$ contained in $L$, a van der Waerden subset $L''$ of $L$ and a subset $\widehat{B}$ of $\cap_{i\in P}A_i$ such that
  \begin{equation}
    \label{beq35}
     \mu(\widehat{B})\meg\Big(\frac{\ee}{2}\Big)^{\big{\lfloor}\frac{2}{\ee\cdot\theta_1(k,\ee)}\big{\rfloor}-1}\theta_1(k,\ee),
  \end{equation}
  for every $\ell\in L''$ we have $\mu_{\widehat{B}}(A_\ell)\meg\ee/2$  and the set $B$ belongs to the algebra generated by the family $\{A_\ell:\ell\in L\;\text{and}\;\ell\mik\max P\}$. Thus, setting $r'=\max P$, there exists a subset $B$ of $\prod_{p=r}^{r'-1}[n_p]$ such that
  \begin{equation}
    \label{beq36}
    \dens_{\prod_{p=r}^{r'-1}[n_p]}(B)=\mu(\widehat{B})\stackrel{\eqref{beq35}}{\meg}\Big(\frac{\ee}{2}\Big)^{\big{\lfloor}\frac{2}{\ee\cdot\theta_1(k,\ee)}\big{\rfloor}-1}\theta_1(k,\ee),
  \end{equation}
  for every $\ell\in L''$, setting $B_\ell=\{x\in\prod_{p=r}^{\ell-1}[n_p]:R_{r'-r}(x)\in B\}$, we have
  \begin{equation}
    \label{beq37}
        \dens_{B_\ell}(D_\ell)\meg\ee/2
  \end{equation}
  and
  \begin{equation}
    \label{beq38}
    R_{q-r}(x)\in D_q
  \end{equation}
   for all $q\in P$ and $x\in B$.
  Passing to a final subset $L''$, if it is necessary, we may assume that $\min L''>r'$.

  For every $\ell$ in $L''$ we have the following. For each element $y$ in $\prod_{p=r'}^{\ell-1}[n_p]$ we define $\Gamma_y=\{z\in B:z^\con y\in D_\ell\}$. By \eqref{beq37} and Fubini's Theorem, we have that the set $D'_\ell=\{y\in \prod_{p=r'}^{\ell-1}[n_p]:\dens_B(\Gamma_y)\meg\ee/4\}$ is of density at least $\ee/4$ inside $\prod_{p=r'}^{\ell-1}[n_p]$. Since $\Gamma_y$ is subset of $B$ and therefore subset of $\prod_{p=r}^{r'-1}[n_p]$, we have that there exist a subset $\Gamma_\ell$ of $B$ and a subset $\widetilde{D}_\ell$ of $D'_\ell$ of density at least $(\ee/4)\cdot2^{-\prod_{p=r}^{r'-1}n_p}$ inside $\prod_{p=r}^{r'-1}[n_p]$ such that $\Gamma_y=\Gamma_\ell$ for all $y$ in $\widetilde{D}_\ell$. Let as observe that by the choice of $\Gamma_\ell$ and $D'_\ell$ we have that
  \begin{equation}
    \label{beq39}
    \Gamma_\ell^\con\widetilde{D}_\ell \subseteq D_\ell\text{ and }\dens_B(\Gamma_\ell)\meg\ee/4.
  \end{equation}

  By Fact \ref{Ramsey} there exist a subset $\Gamma$ of $B$ and a van der Waerden subset $L'$ of $L''$, such that $\Gamma_\ell=\Gamma$ for all $\ell\in L'$. Clearly, $(\widetilde{D}_{\ell})_{\ell\in L'}$ is $(r',\ee',(n_q)_q)$-dense, where $\ee'$ is defined in the statement of the lemma.
  By \eqref{beq34}, \eqref{beq36} and \eqref{beq39} we have
  \begin{equation}
    \label{beq40}
    \dens_{\prod_{p=r}^{r'-1}[n_p]}(\Gamma)=\dens_{B}(\Gamma)\cdot\dens_{\prod_{p=r}^{r'-1}[n_p]}(B)\meg\theta_3(k,\ee).
  \end{equation}
  Moreover, by \eqref{beq39}, we have that
  \begin{equation}
    \label{beq41}
    \Gamma^\con\widetilde{D}_\ell\subseteq D_\ell
  \end{equation}
  for all $\ell\in L'$.
  Since for every $q=r,...,r'-1$ we have that $n_q\meg T\big((\ff_p)_{p=r}^q,\theta_3(k,\ee)\big)$, by \eqref{beq40} and Lemma \ref{T_main_property_lem} there exists a sequence $(I_p)_{p=r}^{r'-1}$ such that
  \begin{enumerate}
    \item[(a)] $I_p$ is an element of $\ff_p$ contained in $[n_p]$ for all $p=r,...,r'-1$ and
    \item[(b)] $\prod_{p=r}^{r'-1}I_p$ is subset of $\Gamma$.
  \end{enumerate}
  Since $\prod_{p=r}^{r'-1}I_p\subseteq\Gamma\subseteq B$, by \eqref{beq38} we have that $\prod_{p=r}^{q-1}I_p$ is subset of $D_q$ for all $q\in P$.
  By (b) and \eqref{beq41} we have that $\prod_{p=r}^{r'-1}I_p^\con \widetilde{D}_\ell$ is subset of $D_\ell$ for all $\ell\in L'$ and the proof is complete.
\end{proof}

\section{Definition of the map $V_\delta$ and the proof of Theorem \ref{main_result}}\label{section_map_V_and_proof}
Let us recall that by $\rrr$ we denote the set of all uniformly density regular families. For the sequel, let us adopt for following convection. For a sequence of positive integers $(n_q)_q$ and $r$ a non-negative integer, we consider $(n_p)_{p=r}^{r-1}$ to be the empty sequence and $\prod_{p=r}^{r-1}n_p$ to be equal to zero.
Fix some real $\delta$ with $0<\delta\leq1$. We define the map $V_\delta:\rrr^{<\omega}\times\nn_+^{<\omega}\to\nn$ as follows. For every non-negative integer $q$, every finite sequence $(n_p)_{p=0}^{q-1}$ of positive integers and every finite sequence $(\ff_p)_{p=0}^q$ of uniformly density regular families we set
\begin{equation}
  \label{beq42}
  V_\delta\big((\ff_p)_{p=0}^q,(n_p)_{p=0}^{q-1}\big)=\max_{0\mik r\mik q} T\big((\ff_p)_{p=r}^q,\theta_3(r+1,\delta\cdot2^{-(\prod_{p=0}^{r-1}n_p+2r)})\big)
\end{equation}

\begin{proof}
  [Proof of Theorem \ref{main_result}]
   Let $(n_q)_q$ be a sequence of positive integers,  $(\ff_q)_{q}$ be a sequence of uniformly regular families,  $L$ be a van der Waerden subset of $\nn_+$ and $(D_\ell)_{\ell\in L}$ be $(0,\delta,(n_q)_q)$-dense such that
   \begin{equation}
     \label{beq43}
     n_q\meg V_\delta\big((\ff_p)_{p=0}^q,(n_p)_{p=0}^{q-1}\big)
   \end{equation}
   for every non-negative integer $q$. We set $L_0=L$, $r_0=0$ and $(D^0_\ell)_{\ell\in L_0}=(D_\ell)_{\ell\in L}$. We inductively construct a sequence of arithmetic progressions $(P_n)_{n=1}^\infty$ contained in $L$, a decreasing sequence of van der Waerden sets $(L_n)_n$, a sequence $\big((I_q)_{q=r_{n-1}}^{r_{n}-1}\big)_{n=1}^\infty$ where $r_n=\max P_n$ for all positive integers $n$ and a sequence $\big((D_\ell^n)_{\ell\in L_n}\big)_n$ such that for every non-negative integer $n$ we have the following:
  \begin{enumerate}
    \item[(i)] $r_n<\min P_{n+1}$.
    \item[(ii)] $(D_\ell^n)_{\ell\in L_n}$ is $(r_n,\delta\cdot2^{-(\prod_{p=0}^{r_n-1}n_p+2r_n)},(n_q)_q)$-dense.
    \item[(iii)] If $n$ is positive, then $I_p$ is an element of $\ff_p$ contained in $[n_p]$ for every $p=r_{n-1},...,r_n-1$.
    \item[(iv)] if $n$ is positive, then $P_n$ is an arithmetic progression of length $r_{n-1}+1$ contained in $L_{n-1}$ such that $\prod_{p=0}^{q-1}I_q\subseteq D_q$ for every $q\in P_n$.
    \item[(v)] $(\prod_{p=0}^{r_n-1}I_p)^\con D_\ell^n\subseteq D_\ell$ for all $\ell\in L_n$, under the convection $\prod_{p=0}^{-1}I_p=\{\varnothing\}$.
  \end{enumerate}
  Notice first that for $n=0$ the properties (i)-(v) are satisfied. Assume that for some non-negative integer $n$ we have constructed $(L_m)_{m=0}^n$, $\big((D_\ell^m)_{\ell\in L_m}\big)_{m=0}^n$ and if $n\meg1$ we have constructed $(P_m)_{m=1}^n$ and $\big((I_q)_{q=r_{m-1}}^{r_{m}-1}\big)_{m=1}^n$ satisfying (i)-(v). Then for every integer $q$ with $q\meg r_n$ we have that
  \begin{equation}
    \label{beq44}
    \begin{split}
      n_q&\stackrel{\eqref{beq43}}{\meg}V_\delta\big((\ff_p)_{p=0}^q,(n_p)_{p=0}^{q-1}\big)\\
      &\stackrel{\eqref{beq42}}{\meg}
     T\big((\ff_p)_{p=r_n}^q,\theta_3(r_n+1,\delta\cdot2^{-(\prod_{p=0}^{r_n-1}n_p+2r_n)})\big).
    \end{split}
  \end{equation}
  By \eqref{beq44} and the inductive assumption (ii), we have that the assumptions of Lemma \ref{inductive_step} for ``$\ee=\delta\cdot2^{-(\prod_{p=0}^{r_n-1}n_p+2r_n)}$ '', ``$k=r_n+1$'', ``$r=r_n$'', ``$L=L_n$'' and ``$(D_\ell)_{\ell\in L}=(D^n_\ell)_{\ell\in L_n}$'' are satisfied. Hence there exist an arithmetic progression $P_{n+1}$ of length $r_n+1$ inside $L_n$, a van der Waerden subset $L_{n+1}$ of $L_n$, a finite sequence $(I_p)_{p=r_{n}}^{r_{n+1}-1}$ and an $(r_{n+1},\ee',(n_q)_q)$-dense sequence $(D^{n+1}_\ell)_{\ell\in L_{n+1}}$, where $r_{n+1}=\max P_{n+1}$ and
  \begin{equation}
    \label{beq45}
    \ee'=\delta\cdot2^{-(\prod_{p=0}^{r_n-1}n_p+2r_n)}\cdot2^{-(\prod_{p=r_n}^{r_{n+1}-1}n_p+2)} \meg\delta\cdot2^{-(\prod_{p=0}^{r_{n+1}-1}n_p+2r_{n+1})},
  \end{equation} satisfying the following.
  \begin{enumerate}
    \item[(a)] For all $p=r_n,...,r_{n+1}-1$, the set $I_p$ is an element of $\ff_p$ contained in $[n_p]$.
    \item[(b)] $r_n<\min P_{n+1}$.
    \item[(c)] For every $q\in P_{n+1}$, the set $\prod_{p=r_n}^{q-1}I_p$ is a subset of $D_q^n$.
    \item[(d)] For every $\ell\in L_{n+1}$, the set $(\prod_{p=r_n}^{r_{n+1}-1}I_p)^\con D^{n+1}_\ell$ is a subset of $D^n_\ell$.
  \end{enumerate}
  By (c) and the inductive assumption (v) we have for every $q\in P_{n+1}$ that the set $\prod_{p=0}^{q-1}I_p$ is a subset of $D_q$. By (d) and the inductive assumption (v) we have for every $\ell\in L_{n+1}$ that the set $(\prod_{p=r_n}^{r_{n+1}-1}I_p)^\con D^{n+1}_\ell$ is a subset of $D_\ell$. The proof of the inductive step is complete.

  We set $L'=\cup_{n=1}^\infty P_n$. Moreover, observe that $r_n$ tends to infinity as $n$ tends to infinity. Thus $L'$ is a van der Waerden set.  It is straightforward that $L'$ and $(I_q)_q$ satisfy the conclusion of the Theorem.
\end{proof}

\section{Bounds for the map $V_\delta$.}
In this section, we are interested in bounds for the map $V_\delta$. For every positive integer $m$, we denote by $\ff_{[m]}$ the family of all subsets of the positive integers with $m$ elements. It is immediate that
\begin{equation}
  \label{beq46}
  B(\ff_{[m]},\ee)=\lceil m/\ee\rceil.
\end{equation}
We set
\begin{equation}
  \label{beq47}
  \rrr_c=\{\ff_{[m]}:m\text{ is a positive integer}\}.
\end{equation}

We will also need the following remark.
\begin{rem}
  \label{rem}
  Let $\rrr'$ be a subfamily of $\rrr$ and $T':\rrr'^{<\omega}\times\rr_+\to\nn$ be a map satisfying the conclusion of Lemma \ref{T_main_property_lem} for $\ff_0,...,\ff_q$ from $\rrr'$. Also let $V'_\delta:\rrr^{<\omega}\times\nn_+^{<\omega}\to\nn$ be the map defined as in \eqref{beq42} using $T'$ instead of $T$. Then one can check that $V'_\delta$ satisfies the conclusion of Theorem \ref{main_result} under the restriction that each $\ff_q$ is chosen from $\rrr'$.
\end{rem}
We define $T':\rrr_c^{<\omega}\times\rr_+\to\nn$, setting $T'\big((\ff_{[m_p]})_{p=0}^q,\ee\big)=T_\ee\big((\ff_{[m_p]})_{p=0}^q\big)$ for every choice of non-negative integer $q$, positive integers $m_0,...,m_q$ and real $\ee$ with $0<\ee\mik1$, where $T_\ee$ is defined in equation (3) from \cite{TT}. We also define $V'_\delta:\rrr^{<\omega}\times\nn_+^{<\omega}\to\nn$ as in \eqref{beq42} using $T'$ instead of $T$. Lemma 3 from \cite{TT} yields that $T'$ satisfies the conclusion of Lemma \ref{T_main_property_lem}. Hence by Remark \ref{rem} we have that $V'_\delta$ satisfies the conclusion of Theorem \ref{main_result}. For the sequel we fix a sequence $(m_q)_{q}$ of integers and a real $\delta$ satisfying the following.
\begin{enumerate}
  \item[(i)] $0<\delta\mik1$ and
  \item[(ii)] $m_q\meg2$ for every non-negative integer $q$.
\end{enumerate}
We define a maps $f_c:\nn\to\nn$ inductively as follows.
We set
\begin{equation}
  \label{beq48}
  f_c(0)=V'_\delta\big((\ff_{[m_0]}),\varnothing\big) \text{ and }f_{c}(q+1)=V'_\delta\big((\ff_{[m_p]})_{p=0}^{q+1},(f_c(p))_{p=0}^{q}\big)
\end{equation}
for all $q=0,1,...$. In particular, we are interested in the rate of growth of the map $f_c$. We will need the following inequalities.
By Lemma 14 of \cite{TT} we have that
\begin{equation}
\label{beq49}
  T'\big((\ff_{[m_p]})_{p=0}^q,\ee\big)\mik A_2\Big(5\log_2(1/\ee)\prod_{p=0}^qm_p\Big)
\end{equation}
for every non-negative integer $q$ and every $0<\ee\mik\delta/2$, where $A_2(x)=2^x$ for every real $x$. Moreover, by Theorem 18.2 of \cite{G} we have that
\begin{equation}
  \label{beq50}
  \mathrm{Sz}(k,\ee)\mik A_2^{(3)}(\log_2 (1/\ee)A_2^{(2)}(k+9)),
\end{equation}
for all positive integers $k$ and all reals $\ee$ with $0<\ee\mik1$.

\begin{prop}
  We have that $f_c(q)\mik A_2^{(1+6q)}\big(5\big((2/\delta^2\big)+1)\log_2(2/\delta)m_0+\sum_{p=1}^qm_p\big)$, for every non-negative integer $q$.
\end{prop}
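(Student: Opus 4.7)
The proof will proceed by induction on $q$, using the closed-form definition $f_c(q+1) = \max_{0\mik r\mik q+1} T'\big((\ff_{[m_p]})_{p=r}^{q+1},\theta_3(r+1,\ee_r)\big)$ with $\ee_r=\delta\cdot 2^{-(\prod_{p=0}^{r-1}f_c(p)+2r)}$, and unpacking each of the nested invariants $\theta_1,\theta_3,T'$ via the explicit bounds \eqref{beq49} and \eqref{beq50}. The base case $q=0$ reduces to $f_c(0)=T'\big((\ff_{[m_0]}),\theta_3(1,\delta)\big)$; since $\theta_1(1,\delta)=\delta$, one directly computes $-\log_2\theta_3(1,\delta)\mik 1+\lfloor 2/\delta^2\rfloor\log_2(2/\delta)+\log_2(1/\delta)\mik ((2/\delta^2)+1)\log_2(2/\delta)$, and plugging into \eqref{beq49} produces exactly the claimed constant in the outermost tower.

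For the inductive step I would process the three invariants in order. First, using \eqref{beq50} together with $1/\theta_1(k,\ee)\mik (2/\ee)\mathrm{Sz}(k,\ee/2)^2$, I would show that with $u=\log_2(2/\ee)$ and $\alpha_k=A_2^{(2)}(k+9)$ one has $-\log_2\theta_1(k,\ee)\mik 3A_2^{(2)}(u\alpha_k)$ and $2/(\ee\theta_1(k,\ee))\mik A_2^{(3)}(u\alpha_k+2)$. Combining these in the definition of $\theta_3$ and using elementary manipulations of iterated exponentials (in particular $A_2^{(3)}(x)^2\mik A_2^{(3)}(x+1)$ and $c\cdot A_2^{(k)}(x)\mik A_2^{(k)}(x+\log_2 c)$ for $x$ large enough), I would obtain the clean bound $-\log_2\theta_3(k,\ee)\mik A_2^{(3)}(u\alpha_k+C)$ for an absolute constant $C$. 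Feeding this into \eqref{beq49} yields, for every $r\mik q+1$,
\begin{equation*}
T'\big((\ff_{[m_p]})_{p=r}^{q+1},\theta_3(r+1,\ee_r)\big)\mik A_2^{(4)}\Big(u_r\,\alpha_{r+1}+C+\sum_{p=r}^{q+1}m_p\Big),
\end{equation*}
where $u_r=\log_2(2/\ee_r)=1+\log_2(1/\delta)+\prod_{p=0}^{r-1}f_c(p)+2r$.

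Next I would invoke the inductive hypothesis on the factors $f_c(p)$ appearing inside $u_r$. Writing $N_p=5((2/\delta^2)+1)\log_2(2/\delta)m_0+\sum_{s=1}^p m_s$, the bound $f_c(p)\mik A_2^{(1+6p)}(N_p)$ and the collapsing identity $[A_2^{(1+6q)}(N_q)]^{q+1}\mik A_2^{(1+6q)}(N_q+\log_2(q+1))$ give $u_r\mik A_2^{(1+6(r-1))}(N_{r-1}+O(r))$ for each $r\meg 1$ (with the trivial estimate for $r=0$). The case $r=q+1$ is the dominant one, giving $u_{q+1}\alpha_{q+2}\mik A_2^{(1+6q)}(N_q+O(q))$ after another application of the collapsing rule, and hence
\begin{equation*}
T'\big((\ff_{[m_p]})_{p=q+1}^{q+1},\theta_3(q+2,\ee_{q+1})\big)\mik A_2^{(5+6q)}\big(N_{q+1}+D_q\big)
\end{equation*}
for a quantity $D_q$ depending only on $q$ and $\delta$. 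The same estimate (with a smaller number of exponentials) is easily verified for the other values of $r$, so taking the maximum leaves the bound unchanged.

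Finally, the gap of two further exponentials between $A_2^{(5+6q)}$ and the target $A_2^{(7+6q)}=A_2^{(1+6(q+1))}$ provides exactly the slack needed to absorb the extra additive constant $D_q$ into the argument, yielding $f_c(q+1)\mik A_2^{(7+6q)}(N_{q+1})$ and closing the induction. The main obstacle I foresee is purely bookkeeping: each of the constituent invariants ($\mathrm{Sz}$, $\theta_1$, $\theta_3$, $T'$, and the iterated tower $\prod_p f_c(p)$) contributes exponentials at slightly different levels, and one must use the collapsing inequalities $A_2^{(k)}(a)\cdot A_2^{(k)}(b)\mik A_2^{(k)}(a+b)$ and $c\cdot A_2^{(k)}(x)\mik A_2^{(k)}(x+\log_2 c)$ at the right places to keep the count exactly at six new exponentials per step rather than letting it drift upward.
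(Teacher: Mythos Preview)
Your plan is correct and follows essentially the same route as the paper: bound $\theta_1$ via the Gowers bound \eqref{beq50}, feed that into $\theta_3$, feed that into the $T'$ bound \eqref{beq49}, and close an induction on $q$ that gains six exponentials per step. The only tactical difference is that the paper handles the $\max_{0\mik r\mik q}$ in one stroke---it observes that replacing $(\ff_{[m_p]})_{p=r}^q$ by the longer sequence $(\ff_{[m_p]})_{p=0}^q$ and replacing $\theta_3(r+1,\ee_r)$ by the smaller $\theta_3(q+1,\ee_q)$ can only increase $T'$, so every term in the max is dominated by the single hybrid quantity $T'\big((\ff_{[m_p]})_{p=0}^q,\theta_3(q+1,\ee_q)\big)$---arriving directly at the clean recursion $f_c(q)\mik A_2^{(6)}\big(\log_2(2/\delta)+\prod_{p=0}^{q-1}f_c(p)+2q+m_q\big)$, from which the tower bound follows immediately by induction. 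Your case-by-case analysis of each $r$ reaches the same conclusion with a bit more bookkeeping; adopting the paper's single-term majorisation would let you skip the ``$r=q+1$ is dominant'' discussion entirely.
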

\begin{proof}
  By \eqref{beq02} and \eqref{beq50}, for every positive integer $k$ and every $0<\ee\mik1$ we have
  \begin{equation}
    \label{beq51}
    \begin{split}
    \theta_1(k,\ee)&\meg  \ee\cdot [ A_2^{(3)}(\log_2 (2/\ee)A_2^{(2)}(k+9))]^{-2}\\
    &\meg A_2\big(\log_2(1/\ee)\big)^{-1}\cdot A_2^{(2)}\big(1+A_2\big(\log_2(2/\ee)\cdot A_2^{(2)}(k+9)\big)\big)^{-1}\\
    &\meg A_2^{(2)}\big(1+A_2\big(\log_2(2/\ee)\cdot A_2^{(2)}(k+9)\big)\big)^{-1}
  \end{split}
  \end{equation}
  and therefore invoking \eqref{beq34} we have that
  \begin{equation}
    \label{beq52}
    \begin{split}
      \theta_3(k,\ee) \meg &A_2\big(1+(2/\ee)\log_2(2/\ee)A_2^{(2)}\big(1+A_2\big(\log_2(2/\ee)\cdot A_2^{(2)}(k+9)\big)\big)\big)^{-1}\\
      &\cdot A_2^{(2)}\big(1+A_2\big(\log_2(2/\ee)\cdot A_2^{(2)}(k+9)\big)\big)^{-1} \\
      \meg & A_2^{(3)}\big(3+A_2\big(\log_2(2/\ee)A_2^{(2)}(k+9)\big)\big)^{-1}.
    \end{split}
  \end{equation}
  By \eqref{beq42} and \eqref{beq48}, for every positive integer $q$, we have that
  \begin{equation}
    \label{beq53}
     \begin{split}
       f_c(q)&\mik T'\big((\ff_{[m_p]})_{p=0}^q,\theta_3(q+1,\delta\cdot2^{-(\prod_{p=0}^{q-1}f_c(p)+2q)})\big)\\
       &\stackrel{\eqref{beq49},\eqref{beq52}}{\mik} A_2\big(5A_2^{(2)}\big(3+A_2\big(\log_2(2/\ee)A_2^{(2)}(k+9)\big)\big)\prod_{p=0}^qm_p\big)\\
       &\mik A_2^{(6)}\big(\log_2(2/\delta)+\prod_{p=0}^{q-1}f_c(p)+2q+m_q\big).
     \end{split}
  \end{equation}
  By \eqref{new2} and \eqref{beq34}, we have that $\theta_3(1,\ee)\meg(\ee/2)^{(2/\ee^2)+1}$, for all $0<\ee\mik1$. Thus by \eqref{beq42}, \eqref{beq48} and \eqref{beq49}, we have that
  \begin{equation}
    \label{beq54}
    f_c(0)=T'\big((\ff_{[m_0]}),\theta_3(1,\delta)\big)\mik A_2\big(5\log_2(2/\delta)(2/\delta^2+1)m_0\big).
  \end{equation}
  By inequalities \eqref{beq53}, \eqref{beq54} and using induction on $q$, the result follows.
\end{proof}


\end{document}